\def\cT{\mathcal{T}}
\def\cI{\mathcal{I}}
\def\diag{{\rm diag}}
\definecolor{red}{rgb}{0.9,0,0}
\definecolor{blue}{rgb}{0,0,0.9}
\begin{document}

\title{Learning the hub graphical Lasso model with the structured sparsity via an efficient algorithm}

\subtitle{}

\titlerunning{Learning the hub graphical Lasso model}        

\author{Chengjing Wang \and Peipei Tang \and Wenling He \and Meixia Lin
}

\authorrunning{C.J. Wang \and P.P. Tang} 

\institute{Chengjing Wang \at School of Mathematics, Southwest Jiaotong University \email{renascencewang@hotmail.com} \and
  Peipei Tang (corresponding author) \at
              School of Computer and
		Computing Science, Zhejiang University City College, \email{tangpp@hzcu.edu.cn} \and
  Wenling He \at School of Mathematics, Southwest Jiaotong University \email{1165649932@qq.com} \and
  Meixia Lin \at Engineering Systems and Design, Singapore University of Technology and Design \email{meixia\_lin@sutd.edu.sg}
}

\date{Received: date / Accepted: date}

\maketitle

\begin{abstract}
Graphical models have exhibited their performance in numerous tasks ranging from biological analysis to recommender systems. However, graphical models with hub nodes are computationally difficult to fit, particularly when the dimension of the data is large. To efficiently estimate the hub graphical models, we introduce a two-phase algorithm. The proposed algorithm first generates a good initial point via a dual alternating direction method of multipliers (ADMM), and then warm starts a semismooth Newton (SSN) based augmented Lagrangian method (ALM) to compute a solution that is accurate enough for practical tasks. We fully excavate the sparsity structure of the generalized Jacobian arising from the hubs in the graphical models, which ensures that the algorithm can obtain a nice solution very efficiently. Comprehensive experiments on both synthetic data and real data show that it obviously outperforms the existing state-of-the-art algorithms. In particular, in some high dimensional tasks, it can save more than 70\% of the execution time, meanwhile still achieves a high-quality estimation.
\keywords{hub graphical lasso \and structured sparsity \and augmented Lagrangian method \and semismooth Newton method}
\subclass{90C25 \and 65K05 \and 90C06 \and 49M27 \and 90C20}
\end{abstract}

\section{Introduction}
\label{sec:Introduction}
In recent years, graph signal processing is an emerging field that plays a crucial role in various domains due to its ability to analyze and process data represented as graphs, enabling better insights, decision-making, and the development of advanced algorithms and applications. Graphical models have been widely adopted to characterize relationships of data variables in various applications, such as gene finding, medical diagnosis, modeling of protein structures, and so on.
To encode conditional dependence relationships among a set of variables, the Gaussian graphical model is employed to estimate the precision matrix $\Theta = \Sigma^{-1}$ of the data distribution, where $\Sigma$ is the corresponding covariance matrix, and then the sparsity pattern of $\Theta$ is used to determine the conditional independence graph~\cite{cox2014multivariate}. In particular, each variable in the Gaussian graphical model represents a node in a graph, and an edge will link two nodes if the corresponding entry of the precision matrix $\Theta$ is non-zero. A series of variants of Gaussian graphical model have been designed to satisfy the different requirements of practical tasks, one may refer to \cite{chaturvedi2020learning,Defazio2012convex,honorio2009sparse,hosseini2016learning,li2018learning,Meng2014Latent,molstad2018shrinking,ravikumar2008model,ShiWT,shi2024simultaneous,de2019learning,tan2014learning,tarzanagh2018estimation,yuan2007model}, to name but a few. For example, the low-rank model \cite{shi2024simultaneous} explores overall network structure by promoting sparsity and low-rankness, and the hub graphical model \cite{li2018learning,tan2014learning} studies identifying influential hub nodes that play a central role in connectivity.

In this work, we focus on the interesting problem to construct a graph with \emph{hubs}, i.e., learning a graphical model that contains a few special nodes, which are \emph{densely-connected} to several other nodes. Such kind of graph is universal in the real world.
For instance, in the social network of Twitter, only a few users (e.g., famous actors) have a large number of followers and their tweets have strong influence, while most users do not~\cite{hafiene2020influential}.
A more typical example is the gene-regulatory networks, which are generally thought to approximate a hierarchical scale-free network topology~\cite{barabasi2004network}, i.e., the number of edges follows a power-law distribution. In addition, one can refer to \cite{albert2005scale,gao2018modeling,himelboim2017classifying} for more examples. It is without doubt that the graphical model with hubs is of significant importance in graph signal processing due to their pivotal role in information flow, community structure, centrality analysis, resource allocation, and dynamic analysis within networks \cite{DongTFV,DongTTBF,OrtegaFKMV,ShumanNFOV}.

\begin{figure}[h]
	\begin{center}
		\subfigure[ ]{
			\includegraphics[width=.35\columnwidth]{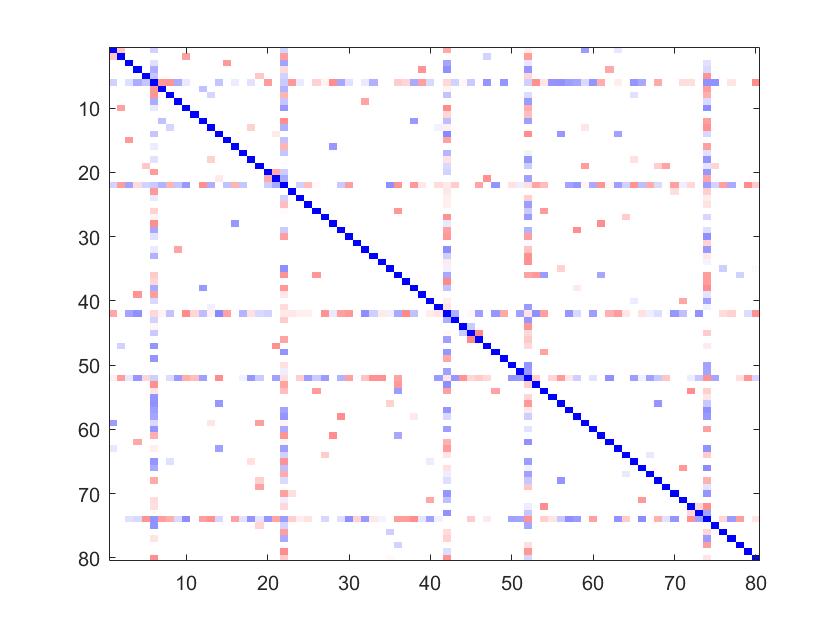}}
		\subfigure[ ]{
			\includegraphics[width=.35\columnwidth]{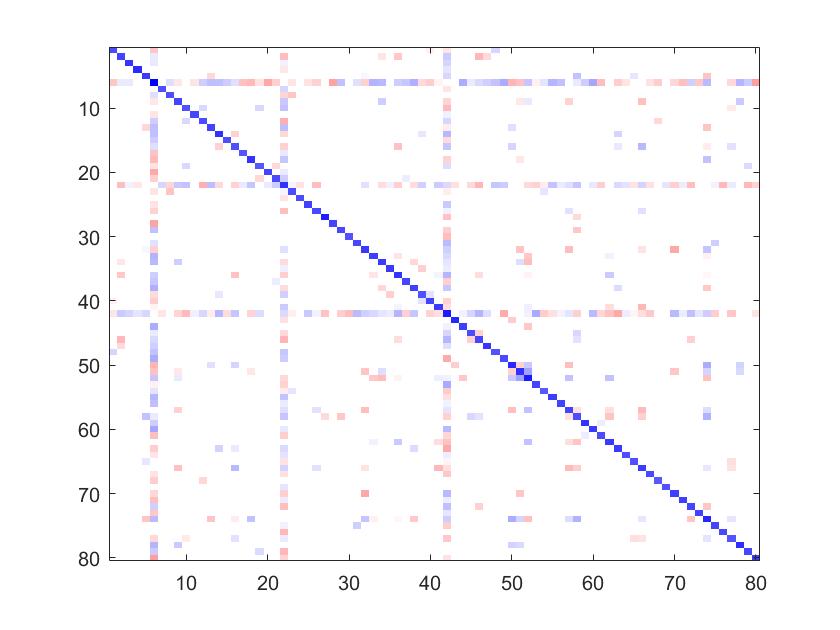}}
		\subfigure[ ]{
			\includegraphics[width=.35\columnwidth]{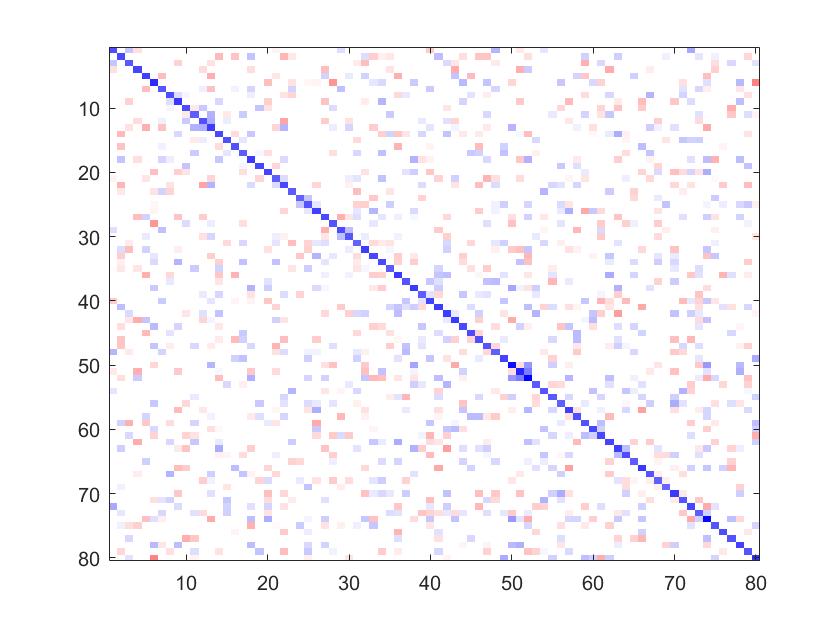}}
		\caption{(a) The inverse covariance matrix in a toy example of a Gaussian graphical model featuring five hub nodes, the inverse covariance matrix displays white elements as zeros and colored elements as non-zeros. Consequently, the colored elements represent the edges in the graph. (b) Estimate from the hub graphical lasso. (c) Graphical lasso estimate
}
		\label{fig:hub_exp}
	\end{center}
\end{figure}

To model a graph with hubs, Tan et al. \cite{tan2014learning} proposed to estimate the precision matrix via adding a sparse-group lasso-type regularization to the Gaussian negative log-likelihood.
Specifically, let $S$ be an empirical covariance matrix with its $j$-th column as $S_{j}$, denote the set of $p \times p$ square matrices by $\mathbb{M}^p$ and the set of $p \times p$ symmetric matrices by $\mathbb{S}^p$.
The problem of learning a graph with hub nodes can be formulated as the following \emph{hub graphical lasso} (HGL) optimization problem:
\begin{equation} \label{eq:HGL}
{\min_{\Theta \in \mathbb{S}^p}} \left\{- \log \det(\Theta) +  \langle S, \Theta \rangle  + P(\Theta) \colon \Theta \succ 0 \right\},
\end{equation}
where $\langle S, \Theta \rangle$ is the standard matrix inner product between $S$ and $\Theta$, $\Theta \succ 0$ means that $\Theta \in \mathbb{S}^p$ is positive definite containing the parameters of interest.
The function $P(\Theta)$ is referred to as the \emph{hub penalty}, which takes the form
\begin{equation*}
\begin{aligned}
P(\Theta)  = & \min_{Z\in \mathbb{S}^p, V \in \mathbb{M}^p}  \Big\{\lambda_1 \|Z - \diag(Z)\|_1 + \lambda_2 \|V - \diag(V)\|_1 \\
& + \lambda_3 \sum_{j=1}^p \|{\left(V - \diag(V) \right)}_j \| \,\Big|\, \Theta = Z+V+V^T \Big\}.
\end{aligned}
\end{equation*}
For any $x\in \mathbb{R}^p$, we denote its $q$-norm as $\|x\|_q = (\sum_{i=1}^p |x_i|^q)^{1/q}$. For simplicity, we denote $\|\cdot\|=\|\cdot\|_2$. For any $X\in \mathbb{M}^p$, we denote $\|X\|_1 = \sum_{i=1}^p\sum_{j=1}^p |X_{ij}|$. This penalty function attempts to decompose the estimated precision matrix $\Theta$ into three matrices $Z$, $V$ and $V^{T}$.
The non-zero entries of $Z$ indicate the edges between the non-hub nodes, and the non-zero columns of $V$ corresponds to the hub nodes.
The $\ell_1$ penalty on the off-diagonal entries of $Z$ promotes the sparsity of the solution, which reduces the links between the non-hub nodes.
The combination of the $\ell_1$ and $\ell_2$ penalties for the columns of $V$ induces group sparsity, which enforces the entries of each column to be either all zeros or almost entirely nonzeros. One may see an example of a network containing hub nodes in Figure \ref{fig:hub_exp}. We observe that when the actual network includes hub nodes (Figure 1(a)), the hub graphical lasso (Figure 1(b)) significantly outperforms the graphical lasso (Figure 1(c)), which is a well-established model that applies an $l_1$ penalty to each edge in the graph, in recovering the network.

However, in the HGL model \eqref{eq:HGL}, no prior knowledge of the hubs is assumed. Consequently, the number of hubs identified is always less than or equal to the true number of hubs. To further make use of the prior knowledge of the hubs that may be available in advance, Li, Bai and Zhou \cite{li2018learning} improved the HGL via dividing the hub penalty in a discriminatory way:
\begin{align*}
\widetilde{P}(\Theta)&= \min_{Z \in \mathbb{S}^p, V \in \mathbb{M}^p} \Big\{  \lambda_1 \|Z-\diag(Z)\|_1 + \lambda_2 \sum_{j \notin \mathcal{D}}\|{(V - \diag(V))}_j\|_1\\
&\quad+ \lambda_3 \sum_{j \notin \mathcal{D}}\|{(V - \diag(V))}_j \|+ \lambda_4 \sum_{j \in \mathcal{D}}\|{(V - \diag(V))}_j\|_1\\
&\quad+ \lambda_5 \sum_{j \in \mathcal{D}} \|{(V - \diag(V))}_j \|\,\Big|\, \Theta = Z+V+V^T \Big\},
\end{align*}
where $\mathcal{D} \subseteq \{1,\dots,p\}$ is the index set of hubs with prior information.
Here we set $\lambda_2 \geq \lambda_4$ and $\lambda_3 \geq \lambda_5$ such that the known hubs are imposed a smaller penalty.
It should be noted that the \emph{discriminated hub graphical lasso} (DHGL) reduces to the HGL if $\mathcal{D} = \emptyset$, and reduces to the classical \emph{graphical lasso} (GL) as $\lambda_2, \lambda_3, \lambda_4, \lambda_5 \to \infty$~\cite{yuan2007model}.

Now we directly focus on the DHGL optimization problem.
For convenience, we use $Q(Z)$ and $R(V)$ to denote the penalty function of $Z$ and $V$, i.e.,
\begin{equation*}
Q(Z) = \lambda_1 \|Z - \diag(Z) \|_1, \quad \forall\, Z\in \mathbb{S}^p,
\end{equation*}
and
\begin{align*}
 R(V) &= \lambda_2 \sum_{j \not\in \mathcal{D}}\|{(V-\diag(V))}_j\|_1 + \lambda_3 \sum_{j \not\in \mathcal{D}} \|{(V-\diag(V))}_j\|  \\
&+ \lambda_4 \sum_{j \in \mathcal{D}}\|{(V-\diag(V))}_j\|_1 +\lambda_5 \sum_{j \in \mathcal{D}} \|{(V-\diag(V))}_j\|,\quad \forall\, V\in \mathbb{M}^p.
\end{align*}
In addition, we define that $\log 0 := -\infty$.
Then, the optimization problem can be written as
\begin{align} \label{P} 
\min_{{\Theta, Z \in \mathbb{S}^p}\atop{V \in \mathbb{M}^p}} & \Big\{ \langle S, \Theta \rangle - \log \det(\Theta) + Q(Z) + R(V) \,\Big|\,\Theta = Z + \mathcal{T}V, \Theta \succeq 0 \Big\},
\end{align}
where $\Theta \succeq 0$ indicates that $\Theta \in \mathbb{S}^p$ is positive semidefinite, $\Theta$ contains the variables of interest, and $\mathcal{T}\colon \mathbb{M}^{p} \to \mathbb{S}^p$ is a linear operator which is defined as $\mathcal{T}V := V+V^T$, for any $V \in \mathbb{M}^{p}$. Although problem \eqref{P} introduces two additional parameters compared with HGL, the selection cost remains manageable due to the structured two-stage cross-validation approach outlined in Algorithms 4 and 5. By tuning parameters in stages rather than simultaneously, the computational burden is kept acceptable.

To solve the optimization problem HGL or DHGL, Tan et al. \cite{tan2014learning} applied the \emph{alternating direction method of multipliers} (ADMM) to the problem~(\ref{eq:HGL}), and Li, Bai and Zhou \cite{li2018learning} applied the ADMM to solve the problem~(\ref{P}). (For short, we call this kind of framework as pADMM.)
Unfortunately, as the scale of the problems becomes large, the efficiency of the pADMM decreases.
On one hand, the ADMM often cannot achieve a solution with high accuracy, which may ruin the performance of the HGL or DHGL model.
On the other hand, the computational cost of the ADMM is extremely high and nearly cannot be acceptable when the dimension $p$ of the data is very high.
Hence, an efficient algorithm to solve the problem~(\ref{P}) accurately is needed.
For the classical graphical lasso problem (where $P(\Theta) = \lambda\|\Theta - \diag{\Theta}\|_1$), a number of optimization algorithms were adopted to solve it, e.g., the ADMM~\cite{scheinberg2010sparse}, Nesterov's smooth gradient method \cite{d2008first,lu2009smooth}, the block coordinate ascent method~\cite{banerjee2008model,friedman2008sparse}, the interior point method~\cite{li2010inexact,yuan2007model}, the Newton method~\cite{hsieh2013sparse,oztoprak2012newton}, and the dual spectral projected gradient method \cite{Nakagaki2020dual}.
Besides, many efficient algorithms have been presented towards other forms of penalty functions.
For example, Hsieh et al. \cite{JMLR:v15:hsieh14a} employed an iterative quadratic approximation to estimate the precision matrix of the reweighted graphical lasso model.
And the similar idea was also taken by \cite{yang2015fused} when considering the fused multiple graphical lasso problem.
In addition, Bybee and Atchad\'e \cite{JMLR:v19:17-218} introduced an approximate majorize-minimize algorithm to compute the Gaussian graphical model with change points. Tarzanagh and Michailidis
\cite{tarzanagh2018estimation} applied a linearized multi-block ADMM to learn the Gaussian graphical model with structured sparse components.
Nevertheless, none of these well-designed algorithms can be directly used to solve the DHGL problem.
Moreover, in order to solve this kind of large-scale problems, the sparsity structure should be further explored delicately.

In this paper, we develop a two-phase algorithm to compute a high-quality solution of the DHGL problem efficiently.
Specifically, in the first phase,
we apply the ADMM to the dual of the problem~(\ref{P}) (for short, we call it dADMM), to generate a good initial point to warm start the second phase. (The implementation of the first phase is necessary because the algorithm in the second phase is costly and it is more meaningful when the iteration point is near the solution.)
In the second phase, we apply the augmented Lagrangian method (ALM) with the subproblems solved by the semismooth Newton (SSN) method to obtain a highly accurate solution. When using the SSN method, we need to thoroughly exploit the sparsity structure of the generalized Jacobian to save the computational cost.
The systematic empirical evaluations show that the algorithm is promising and it outperforms the existing pADMM by a large margin in all data settings. While similar approaches have been applied to various problems, such as semidefinite programming with bound constraints \cite{YangST}, convex quadratic programming \cite{liang2022qppal}, and Gaussian graphical models with hidden clustering structures \cite{LinSTW}, our problem remains particularly challenging due to the complex structure of the regularization function. Our contribution lies not only in leveraging this two-phase framework, but also in conducting substantial nonsmooth analysis to ensure the well-definedness of the approach, the validity of key assumptions, and the efficient implementation of the method.

In summary, the contributions of this paper are twofold:
\begin{itemize}
\item[(a)] We design a highly efficient two-phase algorithm to solve the discriminated hub graphical lasso problem.
In particular, we design a surrogate generalized Jacobian of the proximal mapping of the hub penalty, which paves the way for the SSN method to efficiently solve the inner subproblems of the ALM. 
\item[(b)] We analyze the convergence of the dADMM, the ALM, and the SSN method.
We further conduct systematic experiments to evaluate the proposed algorithm empirically, and the experimental results on both synthetic and real data confirm the general efficacy and the efficiency of our approach.
\end{itemize}

The rest of this paper is organized as follows.
In Section \ref{sec:Preliminaries}, we present the dual of the problem~(\ref{P}) and some necessary preliminaries.
In Section \ref{sec:A two-phase algorithm}, we devote to present the two-phase algorithm.
In Section \ref{sec:Numerical issues}, we describe the numerical issues for solving the involved subproblems.
In Section \ref{sec:Numerical experiments}, we implement the numerical experiments to demonstrate the efficacy and efficiency of the proposed algorithm.
In Section \ref{sec:Conclusion}, we conclude this paper.

\subsection{Additional notations}
\label{subsec:Additional notations}
Let $\mathcal{X}$, $\mathcal{Y}$ and $\mathcal{Z}$ be real finite dimensional Euclidean spaces. We denote $x\circ y$ as the Hadamard product of $x,y\in\mathcal{X}$. Let $f:\mathcal{X}\rightarrow\mathcal{Y}$ and $g:\mathcal{Y}\rightarrow\mathcal{Z}$, then $g\circ f$ denotes the composition of the two functions. Given $r>0$, we denote $\mathcal{B}_{2}^{r}:=\{x\in \mathbb{R}^p\, |\, \|x\|\leq r\}$ and $\mathcal{B}_{\infty}^{r}:=\{x\in \mathbb{R}^p\, |\, \|x\|_{\infty}\leq r\}$. For a given closed convex set $C$ and a vector $x$, we denote the Euclidean projection of $x$ onto $C$ by $\operatorname{\Pi}_{C}(x) := \operatorname{\mbox{argmin}}_{y\in C}{\|x-y\|}$. $I_{p}$ denotes a $p\times p$ identity matrix.

\section{Preliminaries}
\label{sec:Preliminaries}
In this section, firstly, we formulate the dual problem of the DHGL problem.
Then we briefly introduce the proximal mapping and the Moreau envelope, which play important roles in the ALM.
Finally, we calculate the proximal mapping and Moreau envelope for the negative log-determinant function $-\log\det(\cdot)$, the regularization terms $Q(\cdot)$ and $R(\cdot)$ in the DHGL problem.

\subsection{Duality and the optimality condition}

The Lagrangian function associated with the problem~(\ref{P}) is
\begin{equation*}
\begin{aligned}
L(\Theta,Z, V; Y) &:=  \left \langle S, \Theta \right\rangle -  \log \det(\Theta) + Q(Z) + R(V)  + \delta_{\mathbb{S}_+^p}(\Theta)\\ &\quad-\langle Y, Z+\mathcal{T}V - \Theta \rangle, \quad
\forall\, \left(\Theta, Z, V, Y \right) \in \mathbb{S}^p \times \mathbb{S}^p \times \mathbb{M}^{p} \times \mathbb{S}^{p},&
\end{aligned}
\end{equation*}
where $\delta_{\mathbb{S}^p_{+}}(\cdot)$ is an indicator function whose effective domain consists of \textit{p}-order positive semidefinite matrices and $Y\in \mathbb{S}^p$ is the Lagrangian multiplier. For the general theory of the Lagrangian function, one may refer to Section 28 of \cite{Rockafellar1970Convex}.
Furthermore, we can obtain the corresponding dual problem:
\begin{equation} \tag{D} \label{D}
\begin{array}{cl}
{\underset{Y, \Omega,\Lambda \in \mathbb{S}^p} {\max}} & {\log\det(\Omega) - Q^*(Y)-R^*(\Lambda) + p} \\
{\text{s.t.}} & {S-\Omega+Y = 0,} \\
 & {\Lambda - \mathcal{T}^* Y = 0,} \\
 & {\Omega \succeq 0,}
\end{array}
\end{equation}
where $\mathcal{T}^*\colon \mathbb{S}^{p} \to \mathbb{M}^p$ is the adjoint operator of $\mathcal{T}$, $Q^*(\cdot)$ and $R^*(\cdot)$ are the Fenchel conjugate functions of $Q(\cdot)$ and $R(\cdot)$, defined by
\begin{equation*}
\begin{array}{l}
Q^*(Y) = \sup \left\{\left\langle Z, Y \right\rangle - Q(Z) \mid Z \in \mathbb{S}^{p} \right\}, \quad \forall \, {Y}\in \mathbb{S}^{p}, \\
R^*(\Lambda) = \sup \left\{\langle V, \Lambda \rangle - R(V) \mid V \in \mathbb{M}^{p} \right\}, \quad \forall \, \Lambda \in \mathbb{M}^{p},
\end{array}
\end{equation*}
where, $\sup$ denotes the supreme.
Note that for any $Y\in \mathbb{S}^p$, we have $\mathcal{T}^* Y = 2Y$. The KKT condition for the problems (\ref{P}) and (\ref{D}) is
\begin{equation} \label{the KKT conditions}
\left\{
\begin{array}{l}
{Z+\mathcal{T}V = \Theta,} \\
{S-\Omega+Y = 0,} \\
{\Theta\Omega = I_{p}}, \\
{Y \in \partial Q(Z),} \\
{\mathcal{T}^* Y \in \partial R(V),} \\
{\Theta \succeq 0, \quad \Omega \succeq 0.} \\
\end{array}
\right.
\end{equation}

\subsection{The proximal mapping and the Moreau envelope}
In a real finite dimensional Euclidean space $\mathcal{H}$, which is equipped with an inner product $\langle \cdot,\cdot\rangle$ and its induced norm $\|\cdot\|$, given a closed convex function $f:\mathcal{H}\rightarrow \mathbb{R}$, the Moreau envelope of $f$ is defined by
\begin{equation*}
e_f(x) := \min_{y \in \mathcal{H}}\left\{ f(y) + \frac{1}{2}\|y-x\|^2\right\}.
\end{equation*}
The unique minimizer of the above minimization problem is called the proximal mapping of $f$ at $x$, and we denote it by $\textrm{prox}_f(x)$.
That is,
\begin{equation*}
\operatorname{prox}_f(x) := \mathop{\arg\min}_{y \in \mathcal{H}}\left\{f(y) + \frac{1}{2}\|y-x\|^2 \right\}.
\end{equation*}
It is proven in ~\cite[Theorem 2.26]{rockafellar2009variational} that the envelope function $e_f(\cdot)$ is convex and continuously differentiable with
\begin{equation*} \label{gradient of envelope}
\nabla e_f(x) = x - \operatorname{prox}_{f}(x) = \operatorname{prox}_{f^*}(x),
\end{equation*}
where the last equality is derived by the Moreau identity, which states that the following equation holds for any $t > 0$ and $x\in \mathcal{H}$,
\begin{equation*}
\operatorname{prox}_{tf}(x)+t \operatorname{prox}_{f^{*} / t}(x / t)=x.
\end{equation*}

\subsection{Properties of the negative log-determinant function}
For convenience, we denote $h(X) = - \log \det(X)$ for any $X \succeq 0$.
The proximal mapping of $h(\cdot)$ and its corresponding Hessian is well studied in~\cite[Lemma 2.1, Lemma 3.1]{wang2010solving}.
We state them in the following two propositions.

\begin{proposition}
\label{prop:phi1}
For any $X \in \mathbb{S}^p$, let its eigendecomposition be $X = P \textrm{Diag}(d)P^T$, where $d$ is the vector of eigenvalues in the descending order and $P$ is an orthonormal matrix whose columns are the corresponding eigenvectors.
Given $\gamma > 0$, we define two scalar functions
\begin{eqnarray*}
\phi_{\gamma}^+(x):=\frac{\sqrt{x^{2}+4\gamma}+x}{2} &\textrm{ and }& \phi_{\gamma}^-(x):=\frac{\sqrt{x^{2}+4\gamma}-x}{2},\quad\forall\, x\in \mathbb{R}.
\end{eqnarray*}
We also define their vector counterparts $\phi_{\gamma}^+ (\cdot): \mathbb{R}^p \to \mathbb{R}^p$ and $\phi_{\gamma}^- (\cdot): \mathbb{R}^p \to \mathbb{R}^p$ as
\begin{eqnarray*}
\phi_{\gamma}^+(d):=\begin{pmatrix}
                      \phi_{\gamma}^+(d_{1}) \\
                      \vdots \\
                      \phi_{\gamma}^+(d_{p}) \\
                    \end{pmatrix}
 &\textrm{ and }& \phi_{\gamma}^-(x):=\begin{pmatrix}
                      \phi_{\gamma}^-(d_{1}) \\
                      \vdots \\
                      \phi_{\gamma}^-(d_{p}) \\
                    \end{pmatrix},\quad \forall\, d\in \mathbb{R}^{p},
\end{eqnarray*}
and define their matrix counterparts $\phi_{\gamma}^+ (\cdot) \colon \mathbb{S}^p \to \mathbb{S}^p$ and $\phi_{\gamma}^- (\cdot): \mathbb{S}^p \to \mathbb{S}^p$ as
\begin{align*}
&\phi_{\gamma}^+(X):= P\operatorname{Diag}(\phi_{\gamma}^+ (d))P^T,\quad\phi_{\gamma}^-(X):= P\operatorname{Diag}(\phi_{\gamma}^- (d))P^T,\quad
 \forall\, X\in\mathbb{S}^{p}.
\end{align*}
Now we have the following two properties:
\begin{itemize}
\item[(i)] Both of $\phi_{\gamma}^+(X)$ and $\phi_{\gamma}^-(X)$ are positive definite for any $X \in \mathbb{S}^p$, and
\begin{equation*}
X =  \phi_{\gamma}^+(X) - \phi_{\gamma}^-(X), \quad \langle \phi_{\gamma}^+(X) , \phi_{\gamma}^-(X)\rangle = \gamma I_{p}.
\end{equation*}
\item[(ii)] The mapping $\phi_{\gamma}^+(\cdot)$ is continuously differentiable and its Fr\'{e}chet derivative\\ $(\phi_\gamma^+)'(X)$ is
\begin{equation*}
(\phi_{\gamma}^+)'(X)[H] = P(M \circ (P^T HP))P^T, \quad \forall\, H \in \mathbb{S}^p,
\end{equation*}
where $M \in \mathbb{S}^p$ is defined by
\begin{equation*}
M_{ij} = \frac{\phi_{\gamma}^+(d_i) + \phi_{\gamma}^+(d_j)}{\sqrt{d_i^2+4\gamma}+ \sqrt{d_j^2+4\gamma}}, \quad i, j=1,\dots, p.
\end{equation*}
\end{itemize}
\end{proposition}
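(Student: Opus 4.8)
Overall approach. The plan is to reduce both parts to scalar facts, exploiting that $X$, $\phi_\gamma^+(X)$ and $\phi_\gamma^-(X)$ share the eigenbasis $P$ from the eigendecomposition of $X$. Since by construction $\phi_\gamma^\pm(X)=P\,\mathrm{Diag}(\phi_\gamma^\pm(d))\,P^T$, every matrix identity in (i) will follow from the corresponding entrywise identity on the eigenvalues $d_1,\dots,d_p$, while (ii) is the differentiation of a primary matrix function built from a smooth scalar function.

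Part (i). First I would note that for every $x\in\mathbb{R}$ and $\gamma>0$ one has $\sqrt{x^{2}+4\gamma}>|x|$, so both $\phi_\gamma^+(x)=\tfrac12(\sqrt{x^{2}+4\gamma}+x)>0$ and $\phi_\gamma^-(x)=\tfrac12(\sqrt{x^{2}+4\gamma}-x)>0$; applying this to each eigenvalue makes $\mathrm{Diag}(\phi_\gamma^\pm(d))$ strictly positive diagonal, and conjugation by the orthogonal $P$ preserves positive definiteness. The identity $X=\phi_\gamma^+(X)-\phi_\gamma^-(X)$ reduces to the scalar fact $\phi_\gamma^+(x)-\phi_\gamma^-(x)=x$. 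Finally, the product identity follows from $\phi_\gamma^+(x)\,\phi_\gamma^-(x)=\tfrac14\big((x^{2}+4\gamma)-x^{2}\big)=\gamma$; since $\phi_\gamma^+(X)$ and $\phi_\gamma^-(X)$ commute (same eigenbasis $P$), their product equals $P\,\mathrm{Diag}(\phi_\gamma^+(d)\circ\phi_\gamma^-(d))\,P^T=\gamma\,PP^T=\gamma I_p$, i.e. $\phi_\gamma^+(X)\phi_\gamma^-(X)=\gamma I_p$ (the matrix product evidently intended by the stated relation).

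Part (ii). The key point is that the scalar map $x\mapsto\phi_\gamma^+(x)$ is continuously differentiable (indeed real-analytic) on all of $\mathbb{R}$, because $x^{2}+4\gamma\ge 4\gamma>0$ keeps the square root away from its singularity. Hence $\phi_\gamma^+(\cdot)$ is a primary matrix function of a $C^{1}$ scalar function, and the Daleckii--Krein differentiation formula applies: $\phi_\gamma^+(\cdot)$ is Fr\'{e}chet differentiable with $(\phi_\gamma^+)'(X)[H]=P\big(M\circ(P^THP)\big)P^T$, where $M_{ij}$ is the first divided difference $\big(\phi_\gamma^+(d_i)-\phi_\gamma^+(d_j)\big)/(d_i-d_j)$ for $d_i\neq d_j$ and $M_{ii}=(\phi_\gamma^+)'(d_i)$. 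It then remains to check that this divided difference equals the stated $M_{ij}$: rationalizing gives $\big(\sqrt{d_i^{2}+4\gamma}-\sqrt{d_j^{2}+4\gamma}\big)/(d_i-d_j)=(d_i+d_j)/\big(\sqrt{d_i^{2}+4\gamma}+\sqrt{d_j^{2}+4\gamma}\big)$, and collecting terms yields $\tfrac12\big(d_i+\sqrt{d_i^{2}+4\gamma}+d_j+\sqrt{d_j^{2}+4\gamma}\big)/\big(\sqrt{d_i^{2}+4\gamma}+\sqrt{d_j^{2}+4\gamma}\big)$; since $d_i+\sqrt{d_i^{2}+4\gamma}=2\phi_\gamma^+(d_i)$, this is exactly $\big(\phi_\gamma^+(d_i)+\phi_\gamma^+(d_j)\big)/\big(\sqrt{d_i^{2}+4\gamma}+\sqrt{d_j^{2}+4\gamma}\big)$. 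Setting $i=j$ recovers $(\phi_\gamma^+)'(d_i)$, so the closed form for $M$ holds uniformly on and off the diagonal and is continuous in $X$, giving continuous differentiability.

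Main obstacle. The crux is the Fr\'{e}chet differentiability in (ii), not the algebra: one must pass from differentiability of the scalar $\phi_\gamma^+$ to differentiability of the matrix function through an eigenbasis that is itself nonsmooth where eigenvalues coalesce. This is precisely what the Daleckii--Krein theory (equivalently, the cited results in \cite{wang2010solving}) handles, the divided-difference structure of $M$ absorbing the degeneracy as $d_i\to d_j$; the verification that $M_{ij}$ takes the stated closed form is then a routine rationalization.
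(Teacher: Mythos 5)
Your proposal is correct and follows the standard route: reduce part (i) to the scalar identities $\phi_\gamma^+(x)-\phi_\gamma^-(x)=x$ and $\phi_\gamma^+(x)\phi_\gamma^-(x)=\gamma$ on the common eigenbasis, and obtain part (ii) from the Daleckii--Krein first-divided-difference formula for primary matrix functions of the smooth scalar map $\phi_\gamma^+$, with the rationalization correctly showing that the divided difference equals the stated $M_{ij}$ both on and off the diagonal. The paper itself gives no proof, deferring to \cite[Lemma 2.1, Lemma 3.1]{wang2010solving}, and your argument is precisely the derivation underlying those cited results, including the correct reading of $\langle \phi_{\gamma}^+(X),\phi_{\gamma}^-(X)\rangle=\gamma I_p$ as the matrix product.
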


\begin{proposition}
\label{prop:phi2}
Given $G\in \mathbb{S}^{p}$ and $\sigma>0$, we have
\begin{eqnarray*}
\min_{\Omega \succeq 0}\Big\{\frac{1}{2\sigma}\|G-\Omega\|^{2}-\log\det(\Omega)\Big\}=\frac{1}{2\sigma}\|\phi_{\sigma}^{-}(G)\|^{2}-\log\det(\phi_{\sigma}^{+}(G)),
\end{eqnarray*}
where the minimizer is attained at $\Omega=\phi_{\sigma}^{+}(G)$. That is, ${\rm prox}_{\sigma h}(G) = \phi_{\sigma}^{+}(G)$.
\end{proposition}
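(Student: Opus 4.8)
The plan is to exploit strict convexity and reduce the claim to verifying a first-order optimality condition, which the candidate $\Omega = \phi_\sigma^+(G)$ satisfies thanks to the algebraic identities already recorded in Proposition \ref{prop:phi1}(i).

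First I would observe that the objective $f(\Omega) := \frac{1}{2\sigma}\|G-\Omega\|^2 - \log\det(\Omega)$ is finite only when $\Omega \succ 0$, since $-\log\det(\Omega) = +\infty$ whenever $\Omega$ is singular or indefinite; hence the constraint $\Omega \succeq 0$ is inactive and the minimization effectively takes place over the open cone of positive definite matrices. On this cone $f$ is strictly convex (the quadratic term is convex and $-\log\det$ is strictly convex) and coercive: it blows up to $+\infty$ both as any eigenvalue of $\Omega$ tends to $0^+$ and as $\|\Omega\| \to \infty$, the latter because the quadratic term dominates the logarithmic one. Consequently $f$ attains a unique minimizer in the interior, characterized by $\nabla f(\Omega) = 0$.

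Next I would compute the gradient, $\nabla f(\Omega) = \frac{1}{\sigma}(\Omega - G) - \Omega^{-1}$, so that the optimality equation reads $\Omega - \sigma\Omega^{-1} = G$. The key step is to verify that $\Omega = \phi_\sigma^+(G)$ solves this. By Proposition \ref{prop:phi1}(i) with $\gamma = \sigma$ we have $G = \phi_\sigma^+(G) - \phi_\sigma^-(G)$ and $\phi_\sigma^+(G)\,\phi_\sigma^-(G) = \sigma I_p$; since $\phi_\sigma^+(G) \succ 0$ is invertible, the second identity gives $\phi_\sigma^-(G) = \sigma(\phi_\sigma^+(G))^{-1}$, and substituting into the first yields exactly $\phi_\sigma^+(G) - \sigma(\phi_\sigma^+(G))^{-1} = G$. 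Because $\phi_\sigma^+(G) \succ 0$ lies in the feasible interior, strict convexity forces it to be the unique minimizer.

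Finally I would substitute back to obtain the optimal value: using $G - \phi_\sigma^+(G) = -\phi_\sigma^-(G)$ gives $\|G - \phi_\sigma^+(G)\|^2 = \|\phi_\sigma^-(G)\|^2$, so $f(\phi_\sigma^+(G)) = \frac{1}{2\sigma}\|\phi_\sigma^-(G)\|^2 - \log\det(\phi_\sigma^+(G))$, as claimed. The proximal-mapping identity is then immediate: multiplying $f$ by $\sigma > 0$ preserves the minimizer, and $\sigma f(\Omega) = \frac{1}{2}\|\Omega - G\|^2 + \sigma h(\Omega)$, whose minimizer is by definition $\operatorname{prox}_{\sigma h}(G)$; hence $\operatorname{prox}_{\sigma h}(G) = \phi_\sigma^+(G)$. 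The only genuinely delicate point is the first paragraph's reduction, namely confirming that the semidefinite constraint is inactive and that a minimizer exists in the interior, after which everything collapses to the clean algebraic substitution enabled by Proposition \ref{prop:phi1}.
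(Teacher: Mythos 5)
Your proof is correct. The paper itself does not prove this proposition --- it simply imports it from \cite[Lemma~2.1]{wang2010solving} --- and your argument (strict convexity and coercivity of $\frac{1}{2\sigma}\|G-\Omega\|^2-\log\det(\Omega)$ on $\mathbb{S}^p_{++}$, the stationarity equation $\Omega-\sigma\Omega^{-1}=G$, and verification via the identities $\phi_\sigma^+(G)-\phi_\sigma^-(G)=G$ and $\phi_\sigma^+(G)\phi_\sigma^-(G)=\sigma I_p$) is precisely the standard derivation given there. One minor remark: Proposition~\ref{prop:phi1}(i) as printed writes the second identity with inner-product brackets, $\langle\phi_\gamma^+(X),\phi_\gamma^-(X)\rangle=\gamma I_p$, which is evidently a typo for the matrix product; you have read it correctly as $\phi_\sigma^+(G)\,\phi_\sigma^-(G)=\sigma I_p$, which is what the scalar computation $\phi_\sigma^+(x)\phi_\sigma^-(x)=\sigma$ and the common eigenbasis actually give.
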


\subsection{Properties of the regularization term $Q(\cdot)$}
The regularization term $Q(Z)$ is called the graphical lasso regularizer~\cite{friedman2008sparse}:
\begin{equation*}
Q(Z) = \lambda_1 \|Z-\diag(Z) \|_1 = \sum_{j=1}^p \lambda_1 \| {\left(Z - \diag(Z)\right)}_j \|_1.
\end{equation*}
For $j=1,\dots,p$, let $Z_{[j]} := [Z_{1j}, \dots, Z_{j-1,j}, 0, Z_{j+1,j}, \dots, Z_{p,j}]^{T} \in \mathbb{R}^p$,
and define $\mathcal{P}_j: \mathbb{M}^{p} \to \mathbb{R}^p$ as a linear map satisfying $\mathcal{P}_j Z = Z_{[j]}$. For $x\in\mathbb{R}^{p}$,
the adjoint $\mathcal{P}_j^*: \mathbb{R}^{p} \to \mathbb{M}^{p}$ at $x$ is defined as placing $x$ in the \textit{j}-th column of a $p \times p$ all-zero matrix $X$, and then setting $X_{jj}$ to be zero.
Additionally, define an operator $\mathcal{P}: \mathbb{M}^{p} \to \mathbb{R}^{p^{2}}$ as $\mathcal{P}:= [\mathcal{P}_1; \mathcal{P}_2; \dots; \mathcal{P}_p]$, such that $\mathcal{P}X$ conducts the operation $\mathcal{P}_j$ on the \textit{j}-th column of $X$ for all $j=1,\dots,p$, which actually sets the diagonal elements of $X$ to be zeros and then stacks each column of the corresponding matrix to be a column vercor.
Thus $Q(Z) = \sum_{j=1}^p \lambda_1 \|\mathcal{P}_j Z\|_1$. Then we can compute the closed-form expression of the proximal mapping $\textrm{prox}_Q(\cdot)$, which is stated in the following proposition. We ignore its proof since it is very similar to that of Proposition \ref{The composite of proximal operator}.
\begin{proposition}
\label{prop:proxl1}
For any $Y \in \mathbb{S}^{p}$, the proximal operator of $Q(\cdot)$ at $Y$ can be computed as
\begin{align*}
\operatorname{prox}_Q(Y) &= \mathop{\arg\min}_{Z \in \mathbb{S}^{p}}  \left\{\frac{1}{2}\|Z-Y\|^2 + Q(Z) \right\}
= Y - \mathcal{P}^* \operatorname{\Pi}_{\mathcal{B}^{p,\lambda_{1}}_{\infty}}(\mathcal{P}Y),
\end{align*}
where $\mathcal{B}^{p,\lambda_{1}}_{\infty} := \underbrace{\mathcal{B}_{\infty}^{\lambda_1} \times \dots \times \mathcal{B}_{\infty}^{\lambda_1}}_{p}$,
and $\operatorname{\Pi}_{\mathcal{B}^{p,\lambda_{1}}_{\infty}}(\cdot): \mathbb{R}^{p^{2}} \to \mathbb{R}^{p^{2}}$ is a block-wise projection onto the $\ell_{\infty}$ norm ball with the radius $\lambda_1$.
Specifically, the \textit{j}-th block of $\operatorname{\Pi}_{\mathcal{B}_{\infty}^{p,\lambda_{1}}}(\mathcal{P}Y)$ is
\begin{align*}
[\operatorname{\Pi}_{\mathcal{B}_{\infty}^{p,\lambda_{1}}}(\mathcal{P}Y)]_{(j-1)p+1:jp}&= \operatorname{\Pi}_{\mathcal{B}_{\infty}^{\lambda_1}}(\mathcal{P}_j Y)\\
&=
\operatorname{sign}(\mathcal{P}_j Y) \circ \min(|\mathcal{P}_j Y|, \lambda_1),\quad j=1,\ldots,p.
\end{align*}

\end{proposition}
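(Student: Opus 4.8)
The plan is to evaluate the defining minimization of $\operatorname{prox}_Q(Y)$ directly, exploiting that the objective is fully separable once the symmetry constraint is temporarily dropped, and then to recast the resulting entrywise soft-thresholding into the operator form claimed. First I would replace the constraint $Z\in\mathbb{S}^p$ by $Z\in\mathbb{M}^p$ and consider $\min_{Z\in\mathbb{M}^p}\{\frac12\|Z-Y\|^2+\lambda_1\|Z-\diag(Z)\|_1\}$. Since $\|Z-\diag(Z)\|_1=\sum_{i\neq j}|Z_{ij}|$ and the quadratic term splits as $\frac12\sum_{i,j}(Z_{ij}-Y_{ij})^2$, this objective decouples completely across the $p^2$ entries: the diagonal entries are governed by $\frac12(Z_{ii}-Y_{ii})^2$ and are optimal at $Z_{ii}=Y_{ii}$, while each off-diagonal entry solves the scalar problem $\min_t\{\frac12(t-Y_{ij})^2+\lambda_1|t|\}$, whose unique solution is the soft-thresholding value $\operatorname{sign}(Y_{ij})\max(|Y_{ij}|-\lambda_1,0)$.

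The delicate point is to justify that this relaxed minimizer in fact solves the original problem over $\mathbb{S}^p$. Because $Y$ is symmetric, soft-thresholding is applied to the equal entries $Y_{ij}=Y_{ji}$, so the relaxed minimizer is itself symmetric; it therefore lies in $\mathbb{S}^p\subseteq\mathbb{M}^p$, is feasible for the constrained problem, and attains the optimum over the larger feasible set, so it must be the unique proximal point. This is where the symmetry has to be tracked carefully: one must confirm both that the unconstrained minimizer returns to $\mathbb{S}^p$ and that the diagonal, which is excluded from the penalty and from the operator $\mathcal{P}$, is left unthresholded. Equivalently, one may work on $\mathbb{S}^p$ from the outset, in which case each off-diagonal variable $Z_{ij}=Z_{ji}$ receives a doubled quadratic weight and a doubled $\ell_1$ weight, and the common factor of two cancels to leave the same threshold $\lambda_1$.

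It then remains to repackage the closed form. At the scalar level the Moreau decomposition yields $\operatorname{sign}(y)\max(|y|-\lambda_1,0)=y-\operatorname{sign}(y)\min(|y|,\lambda_1)=y-\Pi_{\mathcal{B}_\infty^{\lambda_1}}(y)$. Collecting the off-diagonal entries of the $j$-th column via $\mathcal{P}_j$, applying the blockwise projection $\Pi_{\mathcal{B}_\infty^{p,\lambda_1}}$ onto the product of $\ell_\infty$-balls, and reassembling with $\mathcal{P}^*$—which places each projected block back into its column and restores zeros on the diagonal—reproduces $\operatorname{sign}(Y_{ij})\min(|Y_{ij}|,\lambda_1)$ on the off-diagonal entries and $0$ on the diagonal, since $\mathcal{P}$ annihilates the diagonal. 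Subtracting from $Y$ then gives $\operatorname{prox}_Q(Y)=Y-\mathcal{P}^*\Pi_{\mathcal{B}_\infty^{p,\lambda_1}}(\mathcal{P}Y)$, together with the stated per-block expression.

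An alternative route, closer in spirit to the statement, is to note that $Q$ is positively homogeneous, so its conjugate $Q^*$ is the indicator of the set of symmetric matrices with zero diagonal whose off-diagonal entries lie in $\mathcal{B}_\infty^{\lambda_1}$; Moreau's identity $\operatorname{prox}_Q(Y)=Y-\operatorname{prox}_{Q^*}(Y)$ then reduces the claim to identifying $\operatorname{prox}_{Q^*}=\mathcal{P}^*\Pi_{\mathcal{B}_\infty^{p,\lambda_1}}(\mathcal{P}\,\cdot)$, which is exactly the composite-proximal-operator computation of Proposition~\ref{The composite of proximal operator} and rests on $\mathcal{P}\mathcal{P}^*$ acting as a coordinate projection. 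I expect the handling of the symmetry constraint—not the thresholding itself—to be the only real obstacle.
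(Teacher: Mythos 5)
Your proposal is correct. Note first that the paper does not actually spell out a proof of this proposition: it states that the argument is ``very similar to that of Proposition~\ref{The composite of proximal operator}'', whose appendix proof computes the Fenchel conjugate of the regularizer as the indicator of a box-type constraint set and then invokes the Moreau identity $\operatorname{prox}_{Q}(Y)=Y-\operatorname{prox}_{Q^*}(Y)=Y-\Pi_C(Y)$. Your final paragraph is precisely that route (with $Q^*=\delta_C$ for $C=\{Y\in\mathbb{S}^p: Y_{jj}=0,\ |Y_{ij}|\le\lambda_1\}$), so it matches the paper's intended argument. Your primary route is different and more elementary: drop the symmetry constraint, exploit full entrywise separability to reduce to scalar soft-thresholding, verify that the relaxed minimizer is automatically symmetric (or, equivalently, observe that working on $\mathbb{S}^p$ doubles both the quadratic and the $\ell_1$ weights so the threshold is unchanged), and then repackage via the scalar identity $\operatorname{sign}(y)\max(|y|-\lambda_1,0)=y-\Pi_{\mathcal{B}_\infty^{\lambda_1}}(y)$ into the operator form with $\mathcal{P}$ and $\mathcal{P}^*$. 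Both arguments are sound; the direct route makes the symmetry issue and the untouched diagonal explicit, while the conjugate/Moreau route is shorter and generalizes uniformly to the $\ell_2$-norm blocks needed for $R(\cdot)$, which is presumably why the paper adopts it.
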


Next, for any $Y \in \mathbb{S}^{p}$, we define an alternative for the Clarke generalized Jacobian of $\textrm{prox}_{Q}(\cdot)$ at $Y$, i.e., $\partial \textrm{prox}_{Q}(Y)$, as below
\begin{equation}
\widehat{\partial} \textrm{prox}_{Q}(Y) :=  \{\mathcal{I} - \mathcal{P}^* \Sigma\mathcal{P}\mid \Sigma\in \partial \operatorname{\Pi}_{\mathcal{B}_{\infty}^{p,\lambda_{1}}}(\mathcal{P}Y)\},
\label{eq:partial prox_Q}
\end{equation}
where $\partial \operatorname{\Pi}_{\mathcal{B}_{\infty}^{p,\lambda_{1}}}(\mathcal{P}Y)$ is the Clarke generalized Jacobian of $\operatorname{\Pi}_{\mathcal{B}_{\infty}^{p,\lambda_{1}}}(\cdot)$ at $\mathcal{P}Y$, which takes the form
\begin{eqnarray*}
\partial \operatorname{\Pi}_{\mathcal{B}_{\infty}^{p,\lambda_{1}}}(\mathcal{P}Y) = \left\{ \textrm{Diag}(\Sigma_1, \dots,\Sigma_p) \left| \Sigma_j\in \partial \operatorname{\Pi}_{\mathcal{B}_{\infty}^{\lambda_1}}(\mathcal{P}_j Y),\,
 j = 1,\cdots,p\right.\right\}.
\end{eqnarray*}
Particularly, for any $y \in \mathbb{R}^p$, we have
\begin{align*}
&	\partial \operatorname{\Pi}_{\mathcal{B}_{\infty}^{\lambda_1}}(y) =
	\left\{ {\rm Diag}(u) \,\left|\, u\in \mathbb{R}^p,\, u_i\in \left\{
	\begin{array}{cl}
		\{1\}, & \text{if}~|y_i| < \lambda_1 \\
		\{t \mid 0 \leq t \leq 1\}, & \text{if}~|y_i| = \lambda_1 \\
		\{0\}, & \text{if}~|y_i| > \lambda_1
	\end{array}
	\right.,\quad i = 1,\cdots,p.\right.\right\}
\end{align*}

Note that it has been proven in \cite[Example 2.5]{hiriart1984generalized} that for any $H\in\mathbb{S}^{p}$,
\begin{equation*}
\widehat{\partial} \textrm{prox}_{Q}(Y)[H] = \partial \textrm{prox}_{Q}(Y)[H].
\end{equation*}

\subsection{Properties of the regularization term $R(\cdot)$}

The regularization term $R(\cdot)$ is essentially the weighted group graphical lasso regularizer:
\begin{equation*}
\begin{aligned}
&R(V) = \,  \lambda_2 \sum_{j \notin \mathcal{D}} \|{\left(V-\diag{(V)} \right)}_j \|_1 + \lambda_3 \sum_{j \notin \mathcal{D}} \|{\left(V-\diag{(V)} \right)}_j \|  \\
&\quad+ \lambda_4 \sum_{j \in \mathcal{D}} \|{\left(V-\diag{(V)}\right)}_j \|_1 + \lambda_5 \sum_{j \in \mathcal{D}} \|{\left(V-\diag{(V)}\right)}_j \|, \quad \forall\, V\in \mathbb{M}^p.
\end{aligned}
\end{equation*}
For simplicity, for any $V\in \mathbb{M}^p$, we denote
\begin{align*}
\psi(V) &= \sum_{j=1}^p w_{1,j} \| {\left(V-\diag{(V)}\right)}_j\|_1,\quad
 \varphi(V) = \sum_{j=1}^p w_{2,j} \|{\left(V-\diag{(V)}\right)}_j\|,
\end{align*}
where
\begin{equation*}
w_{1,j} = \left\{
\begin{array}{cl}
\lambda_2, &\ \text{if}~j \notin \mathcal{D}, \\
\lambda_4, &\ \text{if}~j \in \mathcal{D},
\end{array}
\right. \quad
w_{2,j} = \left\{
\begin{array}{cl}
\lambda_3, &\ \text{if}~j \notin \mathcal{D}, \\
\lambda_5, &\ \text{if}~j \in \mathcal{D}.
\end{array}
\right.
\end{equation*}
Then, borrowing the linear operators $\mathcal{P}_j, j=1,\dots,p$, we can write
\begin{equation}
\begin{aligned}
R(V) = \psi(V) + \varphi(V) = \sum_{j=1}^p (w_{1,j} \|\mathcal{P}_j V\|_1 + w_{2,j} \|\mathcal{P}_j V\|).
\end{aligned}
\label{eq:R-fun}
\end{equation}

The next proposition provides a crucial property.
It reveals that the proximal mapping $\textrm{prox}_R(\cdot)$ of $R = \psi + \varphi$ can be decomposed into the composition of the proximal mappings $\textrm{prox}_{\psi}(\cdot)$ and $\textrm{prox}_{\varphi}(\cdot)$.

\begin{proposition}
\label{The composite of proximal operator}
For any $Y \in \mathbb{M}^{p}$ and $\sigma > 0$,
the proximal operator of $R(\cdot)$ at $Y$ is
\begin{equation*}
\begin{aligned}
\operatorname{prox}_{R}(Y) &=& \operatorname{prox}_\varphi \circ \operatorname{prox}_\psi (Y)=X - \mathcal{P}^* \operatorname{\Pi}_{\mathcal{B}^{w_2}_2}(\mathcal{P}X),
\end{aligned}
\end{equation*}
where
\begin{equation*}
\begin{aligned}
& X = \operatorname{prox}_{\psi}(Y) =  Y - \mathcal{P}^*\operatorname{\Pi}_{\mathcal{B}^{w_1}_\infty}(\mathcal{P}Y), \\
& \mathcal{B}^{w_2}_2 := \mathcal{B}_2^{w_{2,1}}\times \dots \times \mathcal{B}_2^{w_{2,p}}, \quad \mathcal{B}^{w_1}_\infty := \mathcal{B}_\infty^{w_{1,1}}\times \dots \times \mathcal{B}_\infty^{w_{1,p}},
\end{aligned}
\end{equation*}
and $\operatorname{\Pi}_{\mathcal{B}^{w_1}_{\infty}}(\cdot)$ and $\operatorname{\Pi}_{\mathcal{B}^{w_2}_2}(\cdot)$ are block-wise projections onto $\ell_{\infty}$ and $\ell_{2}$ norm ball, respectively.
Specifically, for $j=1,\ldots,p$, the \textit{j}-th block of $\operatorname{\Pi}_{\mathcal{B}^{w_1}_{\infty}}(\mathcal{P}Y)$ and $\operatorname{\Pi}_{\mathcal{B}^{w_2}_2}(\mathcal{P} X)$ are
\begin{eqnarray*}
[\operatorname{\Pi}_{\mathcal{B}^{w_1}_{\infty}}(\mathcal{P} Y)]_{(j-1)p+1:jp} &=& \operatorname{\Pi}_{\mathcal{B}_{\infty}^{w_{1,j}}}(\mathcal{P}_j Y)\\
&=& \operatorname{sign}(\mathcal{P}_j Y) \circ \min(|\mathcal{P}_j Y|, w_{1,j}),
\end{eqnarray*}
and
\begin{gather} \nonumber
\begin{align}
[\operatorname{\Pi}_{\mathcal{B}^{w_2}_{2}}(\mathcal{P} X)]_{(j-1)p+1:jp} &= \operatorname{\Pi}_{\mathcal{B}_2^{w_{2,j}}}(\mathcal{P}_j X)\\
& =\left\{
\begin{array}{cl}
w_{2,j} \frac{\mathcal{P}_j X}{\|\mathcal{P}_j X\|}, &\quad \text{if}~ \|\mathcal{P}_j X\| >  w_{2,j}, \\
\mathcal{P}_j X, &\quad \text{otherwise,}
\end{array}
\right.
\end{align}
\end{gather}
respectively.
\end{proposition}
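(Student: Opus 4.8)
The plan is to reduce the whole computation to a single column (group) and then to establish two separate facts: the closed forms of $\operatorname{prox}_\psi$ and $\operatorname{prox}_\varphi$, and the composition identity $\operatorname{prox}_R = \operatorname{prox}_\varphi \circ \operatorname{prox}_\psi$. First I would exploit that each off-diagonal entry $V_{ij}$ with $i \neq j$ enters exactly one group $\mathcal{P}_j V$, while the diagonal entries enter none. Hence $R$, $\psi$ and $\varphi$ are all separable across the slices produced by $\mathcal{P}_1,\dots,\mathcal{P}_p$, so the minimization defining $\operatorname{prox}_R(Y)$ and the claimed composition both split into $p$ independent group subproblems, together with a penalty-free subproblem on the diagonal where every proximal map acts as the identity. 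It therefore suffices to treat, for a fixed index $j$, the scalarized penalty $r(x) = g_1(x) + g_2(x)$ with $g_1 = w_{1,j}\|\cdot\|_1$ and $g_2 = w_{2,j}\|\cdot\|$ acting on the off-diagonal slice $x = \mathcal{P}_j V$.

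For the two individual proximal maps I would invoke the Moreau identity stated above. Since the Fenchel conjugate of $w\|\cdot\|_1$ is the indicator $\delta_{\mathcal{B}_\infty^{w}}$ of the dual ($\ell_\infty$) ball and $\operatorname{prox}_{\delta_C} = \Pi_C$, the Moreau identity gives $\operatorname{prox}_{w\|\cdot\|_1}(y) = y - \Pi_{\mathcal{B}_\infty^{w}}(y)$; reassembling across groups and leaving the diagonal untouched yields $X = \operatorname{prox}_\psi(Y) = Y - \mathcal{P}^*\Pi_{\mathcal{B}_\infty^{w_1}}(\mathcal{P}Y)$, and the stated block-wise soft-thresholding formula follows from the closed form of $\Pi_{\mathcal{B}_\infty^{w_{1,j}}}$. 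The identical argument, using that the conjugate of $w\|\cdot\|$ is $\delta_{\mathcal{B}_2^{w}}$, produces $\operatorname{prox}_\varphi(X) = X - \mathcal{P}^*\Pi_{\mathcal{B}_2^{w_2}}(\mathcal{P}X)$ and the group-thresholding formula.

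The crux is the composition identity, which is delicate because $\operatorname{prox}_{g_1+g_2} \neq \operatorname{prox}_{g_2}\circ\operatorname{prox}_{g_1}$ for generic convex $g_1,g_2$. I would argue at the single-group level via optimality conditions. Set $u = \operatorname{prox}_{g_1}(y)$ and $z = \operatorname{prox}_{g_2}(u)$, so that $y - u \in \partial g_1(u)$ and $u - z \in \partial g_2(z)$. Since $g_1,g_2$ are finite-valued convex functions, the Moreau--Rockafellar sum rule gives $\partial(g_1+g_2) = \partial g_1 + \partial g_2$, so to conclude $z = \operatorname{prox}_{g_1+g_2}(y)$ it suffices to certify $y - z \in \partial g_1(z) + \partial g_2(z)$, for which the only nontrivial inclusion is $y - u \in \partial g_1(z)$. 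By the explicit form of $\operatorname{prox}_{g_2}$, the vector $z = \max\!\left(1 - w_{2,j}/\|u\|,\,0\right) u$ is a nonnegative multiple of $u$; hence either the scaling factor is positive and $z,u$ share the same sign pattern, so $\partial g_1(z) = \partial g_1(u)$, or $z = 0$ and $\partial g_1(z) = \mathcal{B}_\infty^{w_{1,j}} \supseteq \partial g_1(u)$. In both cases $y - u \in \partial g_1(u) \subseteq \partial g_1(z)$, whence $y - z = (y-u) + (u-z) \in \partial g_1(z) + \partial g_2(z) = \partial(g_1+g_2)(z)$, giving the desired identity group by group.

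The main obstacle is precisely this subdifferential-inclusion step, i.e.\ verifying that $\partial g_1$ can only grow under the group-thresholding map $\operatorname{prox}_{g_2}$. Everything else is routine once separability and the Moreau identity are in place; the structural fact that group soft-thresholding \emph{preserves or annihilates} the sign pattern of each column (rather than, say, flipping a coordinate sign) is exactly what makes the sparse-group decomposition valid here, and I would check the boundary case $\|u\| = w_{2,j}$, where $z = 0$, most carefully since that is where the inclusion $\partial g_1(u) \subseteq \partial g_1(z)$ becomes strict.
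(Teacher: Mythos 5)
Your proposal is correct, and it reaches the result through the same essential mechanism as the paper: separability across the column slices $\mathcal{P}_j$, the Moreau identity applied to the conjugates $\delta_{\mathcal{B}_\infty^{w_{1,j}}}$ and $\delta_{\mathcal{B}_2^{w_{2,j}}}$ for the individual proximal maps, and, for the composition, the key structural fact that group thresholding either preserves the sign pattern of a column (positive scaling) or sends it to zero (where the $\ell_1$ subdifferential becomes the full box), so that $\partial g_1$ can only grow. The one genuine difference is how the composition identity is closed: the paper cites Theorem~1 of Yu's work on decomposing proximal maps and verifies its sufficient condition $\partial\bigl(w_{1,j}\|\mathcal{P}_j Y\|_1\bigr) \subseteq \partial\bigl(w_{1,j}\|\mathcal{P}_j Z\|_1\bigr)$ with $Z = \operatorname{prox}_\varphi(Y)$ at an arbitrary point $Y$, whereas you assemble the subgradient certificate $y - z = (y-u) + (u-z) \in \partial g_1(z) + \partial g_2(z)$ directly from the two prox optimality conditions, checking the inclusion only at $u = \operatorname{prox}_{g_1}(y)$. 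In effect you have inlined a proof of the special case of Yu's theorem that is needed here; this buys a self-contained argument (and you do not even need the full Moreau--Rockafellar equality, since the trivial inclusion $\partial g_1(z) + \partial g_2(z) \subseteq \partial(g_1+g_2)(z)$ suffices), at the cost of a slightly longer write-up than the paper's citation-based route. Your attention to the boundary case $\|u\| = w_{2,j}$, where the inclusion becomes strict, matches the paper's case analysis.
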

\begin{proof}
See Appendix \ref{proof of the composite of proximal operator}.
\end{proof}

Note that even though the proximal mapping $\textrm{prox}_R(\cdot)$ can be obtained by the composition of $\textrm{prox}_{\psi}(\cdot)$ and $\textrm{prox}_{\varphi}(\cdot)$,
the classical chain rule usually cannot work in calculating the generalized Jacobian of $\textrm{prox}_R(\cdot)$ since $\textrm{prox}_{\psi}(\cdot)$ and $\textrm{prox}_{\varphi}(\cdot)$ are non-differentiable \cite{Clarke1983}.
Therefore, we define a surrogate generalized Jacobian of $\textrm{prox}_R(\cdot)$ as follows.
\begin{definition}
For any $Y \in \mathbb{M}^{p}$, $\widehat{\partial}\operatorname{prox}_{R}(Y): \mathbb{M}^{p}\rightrightarrows\mathbb{M}^{p}$ is defined as
\begin{align*}
	\widehat{\partial}\operatorname{prox}_{R}(Y) &:= \left\{ (\mathcal{I} - \mathcal{P}^* \Delta \mathcal{P} ) (\mathcal{I} - \mathcal{P}^* \Xi \mathcal{P} ) \,\left|\,\Delta \in \partial\operatorname{\Pi}_{\mathcal{B}^{w_2}_{2}}(\mathcal{P}X),\, \Xi \in \partial\operatorname{\Pi}_{\mathcal{B}^{w_1}_{\infty}}(\mathcal{P}Y)\right.\right\},
\end{align*}
where $X =Y - \mathcal{P}^*\operatorname{\Pi}_{\mathcal{B}^{w_1}_\infty}(\mathcal{P}Y)$, and
\begin{align*}
\partial\operatorname{\Pi}_{\mathcal{B}^{w_1}_{\infty}}(\mathcal{P}Y) &=  \left\{\operatorname{Diag}(\Xi_1, \dots, \Xi_p) \,\left|\, \Xi_j \in \partial \operatorname{\Pi}_{\mathcal{B}_{\infty}^{w_{1,j}}}(\mathcal{P}_j Y),\,j=1,\cdots,p \right.\right\}, \\
\partial\operatorname{\Pi}_{\mathcal{B}^{w_2}_{2}}(\mathcal{P}X) &= \left\{ \operatorname{Diag}(\Delta_1, \dots, \Delta_p)\,\left|\, \Delta_j \in \partial \operatorname{\Pi}_{\mathcal{B}_{2}^{w_{2,j}}}(\mathcal{P}_j X),\,j=1,\cdots,p\right.\right\}.
\end{align*}
For any $y \in \mathbb{R}^p$ and $j=1,\ldots,p$, we have
\begin{align*}
\partial \operatorname{\Pi}_{\mathcal{B}_\infty^{w_{1,j}}}(y) &= \left\{ {\rm Diag}(u)\,\left| u \in \mathbb{R}^p,\,u_i \in
\left\{
\begin{array}{cl}
\{1\}, & \text{if}\ |y_i| < w_{1,j} \\
\{t \mid 0 \leq t \leq 1\}, & \text{if}\ |y_i| = w_{1,j} \\
\{0\}, & \text{if}\ |y_i| > w_{1,j}
\end{array},\,i=1,\ldots,p.
\right. \right.\right\},\\
\partial \operatorname{\Pi}_{\mathcal{B}^{w_{2,j}}_2}(y) &=
\left\{
\begin{array}{cl}
 \left\{ \frac{w_{2,j}}{\|y\|}(I_{p} - \frac{yy^{T}}{\|y\|^2})\right\}, & \text{if}\ \|y\| >  w_{2,j} \\
\Big\{I_{p} - t \frac{yy^{T}}{(w_{2,j})^2} \Big| 0 \leq t \leq 1\Big\}, & \text{if}\ \|y\| = w_{2,j} \\
\{I_{p}\},  & \text{if}\ \|y\| <  w_{2,j}
\end{array}
\right. .
\end{align*}
\end{definition}

Similarly, by \cite[Example 2.5]{hiriart1984generalized} we have that for any $H\in \mathbb{M}^{p}$, $\widehat{\partial}\textrm{prox}_{R}(Y)[H]=\partial\textrm{prox}_{R}(Y)[H]$.

\begin{proposition}\label{prop:strongly-semismooth}
$\operatorname{prox}_{Q} (\cdot)$ and $\operatorname{prox}_{R} (\cdot)$ are strongly semismooth. That is, let $X,Y\in\mathbb{S}^{p}$, we have
\begin{align}
\operatorname{prox}_{Q} (Y) - \operatorname{prox}_{Q} (X) - \mathcal{W}^{Q}(Y-X) = O(\|Y-X\|^2),\,
 \forall\, \mathcal{W}^{Q} \in \widehat{\partial}\operatorname{prox}_{Q}(Y),
\label{eq:ProxQ_strongly-semismooth}
\end{align}
and for any $U,V\in\mathbb{M}^{p}$, we have
\begin{align}
\operatorname{prox}_{R} (V) - \operatorname{prox}_{R} (U) - \mathcal{W}^{R}(V-U) = O(\|V-U\|^2),\,
 \forall\, \mathcal{W}^{R} \in \widehat{\partial}\operatorname{prox}_{R}(V).
\label{eq:ProxR_strongly-semismooth}
\end{align}
\end{proposition}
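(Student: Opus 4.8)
The plan is to assemble the statement from three standard stability properties of strong semismoothness: (i) every piecewise affine mapping between finite-dimensional spaces is strongly semismooth, and in particular the metric projection onto a polyhedral set is strongly semismooth; (ii) the metric projection onto a Euclidean ball $\mathcal{B}_2^{r}$ is strongly semismooth; and (iii) strong semismoothness is preserved under composition (and under sums and products). Since linear, hence affine, maps are trivially strongly semismooth, the operators $\mathcal{P}$, $\mathcal{P}^*$ and the shift $Y\mapsto Y-(\cdot)$ may be freely inserted into compositions. Local Lipschitz continuity and directional differentiability, which are part of the definition, are automatic here: projections onto convex sets are globally nonexpansive and directionally differentiable, and these properties survive composition, so the only real content is the second-order estimate. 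I would cite the semismoothness literature for (i)--(iii); fact (ii) is the only nonpolyhedral ingredient and I verify it below.

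For $\operatorname{prox}_{Q}$, I would first note that the $\ell_\infty$ ball $\mathcal{B}_\infty^{p,\lambda_1}$ is polyhedral, so by (i) the projection $\Pi_{\mathcal{B}_\infty^{p,\lambda_1}}(\cdot)$ is piecewise affine and strongly semismooth. By Proposition \ref{prop:proxl1}, $\operatorname{prox}_{Q}(Y)=Y-\mathcal{P}^*\Pi_{\mathcal{B}_\infty^{p,\lambda_1}}(\mathcal{P}Y)$, so fact (iii) together with the strong semismoothness of the linear maps shows that $\operatorname{prox}_{Q}(\cdot)$ is strongly semismooth with respect to its Clarke generalized Jacobian $\partial\operatorname{prox}_{Q}$. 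Because we already recorded that $\widehat{\partial}\operatorname{prox}_{Q}(Y)[H]=\partial\operatorname{prox}_{Q}(Y)[H]$ for all $H\in\mathbb{S}^p$, the directional images of the surrogate and of the Clarke Jacobian coincide, so the same $O(\|Y-X\|^2)$ estimate persists when $\mathcal{W}^{Q}$ ranges over $\widehat{\partial}\operatorname{prox}_{Q}(Y)$, which is precisely \eqref{eq:ProxQ_strongly-semismooth}.

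For $\operatorname{prox}_{R}$, I would use the decomposition $\operatorname{prox}_{R}=\operatorname{prox}_\varphi\circ\operatorname{prox}_\psi$ from Proposition \ref{The composite of proximal operator}. The inner map $\operatorname{prox}_\psi(Y)=Y-\mathcal{P}^*\Pi_{\mathcal{B}_\infty^{w_1}}(\mathcal{P}Y)$ is strongly semismooth by the identical polyhedral argument used for $\operatorname{prox}_{Q}$. For the outer map $\operatorname{prox}_\varphi(X)=X-\mathcal{P}^*\Pi_{\mathcal{B}_2^{w_2}}(\mathcal{P}X)$, the block projection $\Pi_{\mathcal{B}_2^{w_2}}(\cdot)$ is strongly semismooth by (ii); to justify (ii) I would observe that on the interior the projection is the identity, while near the sphere $\|y\|=w_{2,j}>0$ one may write $\Pi_{\mathcal{B}_2^{w_{2,j}}}(y)=\min(\|y\|,w_{2,j})\,y/\|y\|$, a product of the piecewise linear scalar $t\mapsto\min(t,w_{2,j})$ with the real-analytic map $y\mapsto y/\|y\|$ (smooth since $\|y\|$ stays bounded away from $0$), both strongly semismooth there, so the product is strongly semismooth. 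Fact (iii) then yields that $\operatorname{prox}_{R}$ is strongly semismooth relative to $\partial\operatorname{prox}_{R}$, and the recorded identity $\widehat{\partial}\operatorname{prox}_{R}(\cdot)[H]=\partial\operatorname{prox}_{R}(\cdot)[H]$ upgrades this to the surrogate, giving \eqref{eq:ProxR_strongly-semismooth}.

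The main obstacle is conceptual rather than computational. Because $\operatorname{prox}_\psi$ and $\operatorname{prox}_\varphi$ are both nonsmooth, the Clarke chain rule fails with equality, so one cannot simply set $\partial\operatorname{prox}_{R}=\partial\operatorname{prox}_\varphi\cdot\partial\operatorname{prox}_\psi$ and read off the estimate from a product of Jacobians. The correct route is to invoke the composition theorem for strongly semismooth mappings, which needs no chain rule, to establish strong semismoothness of the composite $\operatorname{prox}_{R}$, and then to lean on the previously established directional-image identity between the product-form surrogate $\widehat{\partial}\operatorname{prox}_{R}$, whose elements are exactly the operators $(\mathcal{I}-\mathcal{P}^*\Delta\mathcal{P})(\mathcal{I}-\mathcal{P}^*\Xi\mathcal{P})$, and the genuine Clarke Jacobian $\partial\operatorname{prox}_{R}$, in order to legitimately replace the latter by the former in the final estimate. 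The verification of fact (ii) for the Euclidean-ball projection is the only place requiring genuine care, and the sketch above isolates it cleanly.
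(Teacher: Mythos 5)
Your argument is correct, and for \eqref{eq:ProxQ_strongly-semismooth} it coincides with the paper's: the paper likewise observes that $\operatorname{prox}_{Q}$ is piecewise affine and invokes the standard fact that piecewise affine maps are strongly semismooth. For \eqref{eq:ProxR_strongly-semismooth} the paper gives no argument at all --- it simply points to an external result (Theorem~3.1 of the cited sparse-group-lasso paper) --- whereas you supply a self-contained composition proof, so the comparison is really between your route and the standard one behind that citation. The usual (and slightly cleaner) mechanism is a direct telescoping estimate against the product-form surrogate: writing $X=\operatorname{prox}_\psi(V)$, $X'=\operatorname{prox}_\psi(U)$ and $\mathcal{W}^{R}=\mathcal{W}^{\varphi}\mathcal{W}^{\psi}$, one splits
\begin{equation*}
\operatorname{prox}_R(V)-\operatorname{prox}_R(U)-\mathcal{W}^{\varphi}\mathcal{W}^{\psi}(V-U)
=\bigl[\operatorname{prox}_\varphi(X)-\operatorname{prox}_\varphi(X')-\mathcal{W}^{\varphi}(X-X')\bigr]
+\mathcal{W}^{\varphi}\bigl[\operatorname{prox}_\psi(V)-\operatorname{prox}_\psi(U)-\mathcal{W}^{\psi}(V-U)\bigr],
\end{equation*}
and each bracket is $O(\|V-U\|^2)$ by the strong semismoothness of the two factors, the nonexpansiveness of $\operatorname{prox}_\psi$, and the uniform boundedness of $\mathcal{W}^{\varphi}$; this yields the estimate for every element of $\widehat{\partial}\operatorname{prox}_{R}$ directly, with no reference to the Clarke Jacobian of the composite. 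Your route instead establishes strong semismoothness of the composite with respect to $\partial\operatorname{prox}_{R}$ and then transfers the estimate to the surrogate via the recorded directional-image identity $\widehat{\partial}\operatorname{prox}_{R}(Y)[H]=\partial\operatorname{prox}_{R}(Y)[H]$; this is legitimate given that the paper asserts the identity, but it makes the proof lean on that rather delicate set equality, which the telescoping argument avoids entirely. Your verification that the Euclidean-ball projection is strongly semismooth is sound and is indeed the only nonpolyhedral ingredient; both approaches need it.
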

\begin{proof}
Since $\operatorname{prox}_{Q} (\cdot)$ is piecewise affine, according to \cite[Proposition 7.4.7]{FacchineiP}, we obtain \eqref{eq:ProxQ_strongly-semismooth}. Similar to \cite[Theorem 3.1]{zhang2020efficient}, we can easily obtain that \eqref{eq:ProxR_strongly-semismooth} holds.
\end{proof}

\section{A two-phase algorithm} \label{sec:A two-phase algorithm}

In this section, we propose a two-phase algorithm to solve the DHGL problem.
In Phase I, we adopt the dADMM to generate a nice initial point to warm start the Phase I\!I algorithm.
It is worth noting that though the dADMM can be used alone to solve the problem, it is not efficient enough to achieve an accurate solution needed in practical applications.
In Phase I\!I, we apply the superlinearly convergent ALM 
to compute a more accurate solution.
Furthermore, in order to fully leverage the sparsity structure of the Jacobians, we use the SSN method to solve the subproblems.

\subsection{Phase I: dADMM}
The minimization form of the dual problem (\ref{D}) is
\begin{equation}
\label{eq:dual-problem}
\begin{array}{cl}
\underset{Y, \Omega, \Gamma, \Lambda\in \mathbb{S}^p}{\min} & -\log\det(\Omega) + Q^*(\Gamma) + R^*(\Lambda) + \delta_{\mathbb{S}^p_+}(\Omega) - p \\
\text{s.t.} & S-\Omega+Y = 0, \\
 & {\Gamma - Y = 0}, \\
 & {\Lambda - \mathcal{T}^* Y = 0}.
\end{array}
\end{equation}
Its corresponding Lagrangian function is given by
\begin{align*} \label{Lagrangian function for dual problem}
\mathcal{L}(Y, \Omega, \Gamma, \Lambda; \Theta, Z, V) := - \log\det(\Omega) + Q^*(\Gamma) +R^*(\Lambda) + \delta_{\mathbb{S}_+^p}(\Omega) - p
 &\\-\left\langle \Theta, S-\Omega+Y \right\rangle - \langle Z, \Gamma - Y \rangle- \langle V, \Lambda - \mathcal{T}^* Y \rangle,&\\
\forall \, \Omega, Y, \Theta, Z,\Gamma,\Lambda \in \mathbb{S}^p, V \in \mathbb{M}^{p}.&
\end{align*}
For $\sigma > 0$, the associated augmented Lagrangian function is
\begin{eqnarray}
\mathcal{L}_{\sigma}(Y, \Omega, \Gamma, \Lambda; \Theta, Z, V)
= \mathcal{L}(Y, \Omega, \Gamma, \Lambda; \Theta, Z, V)+ \frac{\sigma}{2} \|S-\Omega+Y\|^2&\nonumber\\
+\frac{\sigma}{2} \|\Gamma - Y\|^2 + \frac{\sigma}{2} \|\Lambda - \mathcal{T}^* Y\|^2.&\label{augmented Lagrangian function for dual problem}
\end{eqnarray}
Based on the augmented Lagrangian function (\ref{augmented Lagrangian function for dual problem}), we can adopt the dADMM. We summarize the dADMM for solving the problem (\ref{D}) in Algorithm~\ref{alg:1}. One may refer to Section~\ref{sec:Preliminaries} for the detailed computations of the related proximal mappings.

\begin{algorithm}[H]
\renewcommand{\algorithmicrequire}{\textbf{Input:}}
\renewcommand{\algorithmicensure}{\textbf{Output:}}
\caption{dADMM for DHGL}
\label{alg:1}
\begin{algorithmic}[1]
\REQUIRE $\Omega^0,\Theta^0\in \mathbb{S}_{++}^p, Z^0, \Gamma^0, \Lambda^0 \in \mathbb{S}^p, V^0 \in \mathbb{M}^p,  \sigma > 0$, $\tau\in (0,(1+\sqrt{2})/2)$, (a typical choice is $\tau=1.618$). Let $k=0$, iterate as follows:
\STATE Compute
\begin{equation*}
Y^{k+1} = \frac{1}{6\sigma}(\Theta^k - Z^k - V^k - {V^{k}}^{T}) + \frac{1}{6}(\Gamma^k + 2\Lambda^k - (S-\Omega^k)).
\end{equation*}
\STATE Compute
\begin{equation*}
\left\{
\begin{array}{l}
\Omega^{k+1} = \phi_{1/\sigma}^+(S+Y^{k+1} - \Theta^k / \sigma), \\
\Gamma^{k+1} = \textrm{prox}_{Q^*/\sigma}(Y^{k+1} + Z^k / \sigma), \\
\Lambda^{k+1} = \textrm{prox}_{R^*/\sigma}(\mathcal{T}^* Y^{k+1} + V^k / \sigma).
\end{array}
\right.
\end{equation*}
\STATE Compute
\begin{equation*}
\left\{
\begin{array}{l}
\Theta^{k+1} = \Theta^k - \tau \sigma (S-\Omega^{k+1}+Y^{k+1}), \\
Z^{k+1} = Z^k - \tau \sigma (\Gamma^{k+1} - Y^{k+1}), \\
V^{k+1} = V^k - \tau \sigma (\Lambda^{k+1} - \mathcal{T}^* Y^{k+1}).
\end{array}
\right.
\end{equation*}
\STATE $k \leftarrow k+1$, go to Step 1.
\end{algorithmic}
\end{algorithm}
\begin{remark}
The parameter $\sigma$ is theoretically fixed. In practice, setting $\sigma = 1$ already yields satisfactory performance. However, to further enhance the algorithm, an adaptive strategy can be employed, where $\sigma$ is dynamically adjusted based on the ratio of primal infeasibility to dual infeasibility. This adaptive selection helps balance convergence speed of primal and dual solutions.

It is also worth noting that in Algorithm \ref{alg:1}, the minimization of the augmented Lagrangian function involves six matrix variables. In contrast, the pADMM proposed in \cite{li2018learning} requires minimizing the augmented Lagrangian function with nine matrix variables. This not only increases storage requirements but also adds three additional matrix variables, making the problem more complex and difficult to solve efficiently. The higher dimensionality of the augmented Lagrangian function further complicates optimization, potentially leading to smaller iterative steps and slower convergence.
\end{remark}

Now we give the convergence result of the dADMM for solving the DHGL problem.
For clarity and brevity, we simply use $X_P$ and $X_D$ to denote the primal and dual variables, respectively, i.e.,  $X_P :=(\Theta, Z, V) \in  \mathbb{S}^p_+ \times \mathbb{S}^{p}\times \mathbb{M}^{p}$ and $X_D := (Y, \Omega, \Gamma, \Lambda) \in \mathbb{S}^p \times \mathbb{S}^p_+ \times \mathbb{S}^{p} \times \mathbb{S}^{p}$.
For technical reasons, we consider the following constraint qualification (CQ):

\textbf{CQ:} There exists $X_D^{0} := (Y^{0}, \Omega^{0}, \Gamma^{0}, \Lambda^{0}) \in \mathbb{S}^p \times \mathbb{S}^p_{++} \times \mathbb{S}^{p} \times \mathbb{S}^{p}\cap P$, where $P$ is the constraint set in \eqref{eq:dual-problem}.

Based on ~\cite[Theorem B.1]{FazelPSP}, we have the following convergence result.
\begin{theorem}
Assume that the solution set of \eqref{eq:dual-problem} is nonempty and that the \textbf{CQ} holds.
Let $\tau \in (0, (1+\sqrt{5})/2)$.
Let $\{(X_D^k, X_P^k)\}$ be the sequence generated by the dADMM.
Then $\{X_D^k := (Y^k, \Omega^k, \Gamma^k, \Lambda^k)\}$ converges to an optimal solution of the problem (\ref{D}), and $\{X_P^k := (\Theta^k, Z^k, V^k)\}$ converges to an optimal solution of the problem (\ref{P}).
\end{theorem}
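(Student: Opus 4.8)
The plan is to recognize Algorithm~\ref{alg:1} as the classical two-block alternating direction method of multipliers applied to the dual problem \eqref{eq:dual-problem}, and then to invoke \cite[Theorem B.1]{FazelPSP} after checking its hypotheses. First I would cast \eqref{eq:dual-problem} into the canonical form $\min_{Y,w}\{f(Y)+g(w)\mid \mathcal{A}Y+\mathcal{B}w=c\}$ by taking the first block to be $Y$ with trivial objective $f(Y)\equiv 0$ (closed proper convex), and the second block to be $w:=(\Omega,\Gamma,\Lambda)$ with the separable closed proper convex objective $g(w):=-\log\det(\Omega)+\delta_{\mathbb{S}^p_+}(\Omega)+Q^*(\Gamma)+R^*(\Lambda)-p$. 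The three linear constraints then read $\mathcal{A}Y+\mathcal{B}w=c$, where $\mathcal{A}$ sends $Y$ to $(Y,-Y,-\mathcal{T}^*Y)$, the operator $\mathcal{B}$ sends $(\Omega,\Gamma,\Lambda)$ to $(-\Omega,\Gamma,\Lambda)$, and $c=(-S,0,0)$.

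The key structural observations are two. Since $\mathcal{B}$ is block diagonal, the joint minimization of the augmented Lagrangian over $w$ decouples exactly into the three independent updates of Step~2, computed via $\phi^{+}_{1/\sigma}$ for $\Omega$ by Proposition~\ref{prop:phi2} and via $\operatorname{prox}_{Q^*/\sigma}$ and $\operatorname{prox}_{R^*/\sigma}$ for $\Gamma$ and $\Lambda$. Moreover $\mathcal{A}^*\mathcal{A}=2\mathcal{I}+\mathcal{T}\mathcal{T}^*=6\mathcal{I}$ on $\mathbb{S}^p$, because $\mathcal{T}^*Y=2Y$ gives $\mathcal{T}\mathcal{T}^*Y=4Y$; this is exactly the coefficient $6\sigma$ appearing in Step~1 and shows that the $Y$-subproblem is strongly convex with the unique closed-form minimizer displayed there. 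Likewise $\mathcal{B}^*\mathcal{B}=\mathcal{I}$. Because both coefficient operators are positive definite, the two subproblems are uniquely solvable without any auxiliary proximal term, so Algorithm~\ref{alg:1} is precisely the unmodified two-block ADMM with dual step length $\tau$, and its admissible range matches the golden-ratio bound $\tau\in(0,(1+\sqrt{5})/2)$ of \cite[Theorem B.1]{FazelPSP}.

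Next I would discharge the remaining hypothesis, the existence of a Karush--Kuhn--Tucker point. The nonemptiness of the solution set of \eqref{eq:dual-problem} together with the constraint qualification \textbf{CQ}, which furnishes a feasible point with $\Omega^{0}\succ 0$ lying in the relative interior of the effective domain of the non-polyhedral term $-\log\det(\cdot)$, provides the Slater-type condition guaranteeing strong duality between \eqref{P} and \eqref{D} and the existence of Lagrange multipliers; equivalently, the KKT system \eqref{the KKT conditions} is solvable. With all hypotheses in force, \cite[Theorem B.1]{FazelPSP} yields that the block sequence $\{(Y^k,\Omega^k,\Gamma^k,\Lambda^k)\}$ converges to an optimal solution of \eqref{D}. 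The decisive feature of applying ADMM to the dual is that the iterates $(\Theta^k,Z^k,V^k)$ produced in Step~3 are exactly the ADMM estimates of the multipliers associated with the three constraints of \eqref{eq:dual-problem}; the same theorem guarantees their convergence to a multiplier satisfying \eqref{the KKT conditions}, which by the KKT correspondence is an optimal solution $X_P$ of the primal problem \eqref{P}.

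The main obstacle I anticipate is not the algebraic reformulation but the careful verification that the cited theorem genuinely applies: one must confirm that both coefficient maps have full column rank so that the semi-proximal theorem specializes to the vanilla ADMM with vanishing proximal terms, and that \textbf{CQ} indeed produces a KKT point despite $-\log\det(\cdot)$ being essentially smooth only on the open positive-definite cone, so that the relative-interior requirement of the Slater condition is met by the strictly feasible point. A secondary point requiring care is reconciling the step-length range asserted in the theorem with the more conservative choice prescribed in the input of Algorithm~\ref{alg:1}.
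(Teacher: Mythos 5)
Your proposal is correct and follows essentially the same route as the paper: both reduce the argument to checking the hypotheses of \cite[Theorem B.1]{FazelPSP}, namely the positive definiteness of the quadratic terms in the two block subproblems (the paper verifies $2\sigma\mathcal{I}+\sigma\mathcal{T}\mathcal{T}^{*}\succ 0$ for the $Y$-block and the block-diagonal operator $\mathrm{Diag}(\Omega^{-1}\otimes\Omega^{-1}+\sigma\mathcal{I},\,\sigma\mathcal{I},\,\sigma\mathcal{I})\succ 0$ for the $(\Omega,\Gamma,\Lambda)$-block, which is your $\mathcal{A}^{*}\mathcal{A}=6\mathcal{I}$ and $\mathcal{B}^{*}\mathcal{B}=\mathcal{I}$ computation), together with the use of \textbf{CQ} and the Rockafellar corollaries to guarantee a KKT point linking the dual and primal optima. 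Your additional remarks on the decoupling of the second block and on the discrepancy between the step-length range in the theorem and the one in the algorithm's input are sensible but do not change the substance of the argument.
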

\begin{proof}
Under the \textbf{CQ}, it follows from \cite[Corollaries 28.2.2 and 28.3.1]{Rockafellar1970Convex} that $\overline{X}_P:=(\overline{\Theta}, \overline{Z}, \overline{V}) \in  \mathbb{S}^p_+ \times \mathbb{S}^{p}\times \mathbb{M}^{p}$ is an optimal solution to problem \eqref{P} if and only if there exists a Lagrange multiplier $\overline{X}_D:= (\overline{Y}, \overline{\Omega}, \overline{\Gamma}, \overline{\Lambda}) \in \mathbb{S}^p \times \mathbb{S}^p_+ \times \mathbb{S}^{p} \times \mathbb{S}^{p}$ such that
\begin{equation*}
\left\{
\begin{array}{l}
{\overline{Z}+\mathcal{T}\overline{V} = \overline{\Theta},} \\
{\overline{S}-\overline{\Omega}+\overline{Y} = 0,} \\
{\overline{\Theta}\overline{\Omega} = I_{p},} \\
{\overline{Y} \in \partial Q(\overline{Z}),} \\
{\mathcal{T}^* \overline{Y} \in \partial R(\overline{V}),} \\
{\overline{\Theta} \succeq 0, \quad \overline{\Omega} \succeq 0.} \\
\end{array}
\right.
\end{equation*}
Based on the theoretical framework established in \cite[Theorem B.1]{FazelPSP}, we only need to make sure that the assumptions hold.
Since
\begin{equation*}
2\sigma\cI+\sigma\cT\cT^{*}\succ 0,\quad
\left(
             \begin{array}{ccc}
               \Omega^{-1}\otimes\Omega^{-1}+\sigma\cI &  & \\
                & \sigma\cI & \\
                &   & \sigma\cI \\
             \end{array}
           \right)\succ 0,
\end{equation*}
the desired result follows.\qed
\end{proof}

\subsection{Phase I\!I: ALM}
\label{sec:ALM}
For the ALM, we continue to work on the augmented Lagrangian function, where we will update the primal variables simultaneously instead of updating them alternatively as in Phase I. The bottleneck lies in constructing the generalized Jacobian and extracting its underlying structures for efficient computations within the SSN method for solving the ALM subproblems.

At the \textit{k}-th iteration of the ALM for solving the problem (\ref{D}),
the most important task is to solve the following subproblem
\begin{equation} \label{alm_subproblem}
	\min_{Y, \Omega, \Gamma, \Lambda \in \mathbb{S}^p} \left\{\mathcal{L}_{\sigma_k}(Y, \Omega, \Gamma, \Lambda; \Theta^k, Z^k, V^k)\right\},
\end{equation}
where $\sigma_k$ is a given parameter, $(\Theta^k,Z^k,V^k)$ are given points. Since $\Omega, \Gamma, \Lambda$ are independent of each other, we can solve them separately as follows
\begin{equation*}
	\begin{array}{l}
		\Omega' = \phi_{1/\sigma_k}^+(S + Y - \frac{1}{\sigma_k} \Theta^k),  \\
		\Gamma'  = \textrm{prox}_{Q^*/ \sigma_k}(Y + \frac{1}{\sigma_k} Z^k), \\
		\Lambda' = \textrm{prox}_{R^*/\sigma_k}(\mathcal{T}^* Y + \frac{1}{\sigma_k} V^k).
	\end{array}
\end{equation*}
Then we can substitute them to the problem \eqref{alm_subproblem}, we actually solve the subproblem
\begin{eqnarray}
	\label{eq:ALM subproblem2}
	\min_{Y\in \mathbb{S}^p} & \Phi_k(Y),
\end{eqnarray}
where
\begin{align*}
	\Phi_k(Y) &=  \min_{\Omega, \Gamma, \Lambda \in \mathbb{S}^p} \left\{\mathcal{L}_{\sigma_k}(Y, \Omega, \Gamma, \Lambda; \Theta^k, Z^k, V^k) \right\}\\
	&= \sigma_k \left[e_{h/\sigma_k}\left(S+Y - \frac{1}{\sigma_k}\Theta^k\right) + e_{Q^*/\sigma_k}\left(Y + \frac{1}{\sigma_k} Z^k\right)\right.\\
	&\left.\quad +e_{R^*/\sigma_k}\left(\mathcal{T}^* Y + \frac{1}{\sigma_k} V^k\right)\right]-\frac{1}{2\sigma_k} \left(\|\Theta^k\|^2 + \|Z^k\|^2 + \|V^k\|^2 \right)- p.
\end{align*}
For clarity, we present the framework of the ALM in Algorithm~\ref{alg:2}.

\begin{algorithm}[H]
	\renewcommand{\algorithmicrequire}{\textbf{Input:}}
	\renewcommand{\algorithmicensure}{\textbf{Output:}}
	\caption{ALM for DHGL}
	\label{alg:2}
	\begin{algorithmic}[1]
		\REQUIRE $\Theta^0\in \mathbb{S}^p_{++}, Z^0 \in \mathbb{S}^p, V^0 \in \mathbb{M}^p;  \sigma_0 > 0$. Let $k=0$, iterate as follows:
		\STATE Apply Algorithm \ref{alg:3} to compute
		\begin{equation*} \label{subproblem}
			\begin{aligned}
				Y^{k+1} \approx \mathop{\arg\min}_{Y \in \mathbb{S}^p} \Phi_k(Y).
			\end{aligned}
		\end{equation*}
		\STATE Compute
		\begin{equation*}
			\begin{aligned}
				& \Omega^{k+1} = \phi_{1/\sigma_k}^+ \left(S+Y^{k+1}-\frac{1}{\sigma_k}\Theta^k \right), \\
				& \Gamma^{k+1} = \textrm{prox}_{Q^*/\sigma_k} \left(Y^{k+1}+\frac{1}{\sigma_k}Z^k \right), \\
				& \Lambda^{k+1} = \textrm{prox}_{R^*/\sigma_k} \left(\mathcal{T}^* Y^{k+1} + \frac{1}{\sigma_k}V^k \right).
			\end{aligned}
		\end{equation*}
		\STATE Compute
		\begin{equation*}
			\begin{aligned}
				& \Theta^{k+1} = \Theta^k - \sigma_k \big(S-\Omega^{k+1}+Y^{k+1} \big), \\
				& Z^{k+1} = Z^k - \sigma_k \big(\Gamma^{k+1} - Y^{k+1} \big), \\
				& V^{k+1} = V^k - \sigma_k \big(\Lambda^{k+1} - \mathcal{T}^* Y^{k+1} \big).
			\end{aligned}
		\end{equation*}
		\STATE Update $\sigma_{k+1}=2\sigma_{k}$ and $k \leftarrow k+1$, go to Step 1.
	\end{algorithmic}
\end{algorithm}

\begin{remark}
In Step 1 of Algorithm \ref{alg:2}, we apply the SSN method to obtain the approximate solution of the inner subproblem. The details of the implementation, along with the corresponding convergence results, are provided in Section \ref{sec: ssn}. This method ensures a solid foundation for solving the subproblem efficiently.

Building on this, Algorithm \ref{alg:2} utilizes a second-order method to solve the DHGL problem. While the computational and memory costs per iteration are of the same order as the first-order method dADMM in Algorithm 1, the second-order method ALM achieves significantly faster convergence. This improved efficiency makes our algorithm highly effective, achieving better results with lower computational and memory costs compared to existing approaches in \cite{li2018learning,tan2014learning}.
\end{remark}

\subsubsection{An implementable stopping criteria for the ALM subproblems}
Since the subproblem \eqref{alm_subproblem} often has no analytical solution,
the stopping criteria for the subproblem in the algorithm needs to be discussed.
For $X_D := (Y, \Omega, \Gamma, \Lambda) \in \mathbb{S}^p \times \mathbb{S}^p_+ \times \mathbb{S}^{p} \times \mathbb{S}^{p}$ and $X_P :=(\Theta, Z, V) \in  \mathbb{S}^p_+ \times \mathbb{S}^{p}\times \mathbb{M}^{p}$, we define functions $f_k$ and $g_k$ as follows
\begin{equation*}
\begin{aligned}
f_k(X_D) &:= \mathcal{L}_{\sigma_k}(X_D; X_P^k) = \sup_{X_P} \Big\{\mathcal{L}(X_D;  X_P) - \frac{1}{2\sigma_k}\|X_P - X_P^k\|^2 \Big\}, \\
g_k(X_P) &:= \inf_{X_D} \Big\{\mathcal{L}(X_D;  X_P) - \frac{1}{2\sigma_k}\|X_P - X_P^k\|^2 \Big\} \\
& = \log \det(\Theta) -\langle S, \Theta \rangle - Q(Z) - R(V) - \frac{1}{2\sigma_k}(\|\Theta-\Theta^k\|^2\\
&\quad +\|Z-Z^k\|^2 + \|V-V^k\|^2) - \delta_{F_P}(\Theta, Z, V),
\end{aligned}
\end{equation*}
where $F_P := \left\{(\Theta, Z, V) \mid \Theta -  Z  - \mathcal{T}V = 0 \right\}$.

To obtain $X_{D}^{k+1}$, the following criteria on the approximate computation of the subproblem~(\ref{alm_subproblem}) were proposed in~\cite{rockafellar1976augmented}:
\begin{subequations}
\begin{align}
& f_{k} (X_D^{k+1}) - \inf f_{k}(X_D) \leq \frac{\varepsilon_k^2}{2 \sigma_k},\, \varepsilon_{k} \ge 0, \, \sum_{k=0}^{\infty} \varepsilon_k < \infty,  \label{stop1} \tag{A} \\
&  f_{k} (X_D^{k+1}) - \inf f_{k}(X_D) \leq \frac{\delta_k^2}{2\sigma_k}\|X_P^{k+1} - X_P^k\|^2, 0 \leq \delta_k \leq 1,\,\sum_{k=0}^{\infty} \delta_k < \infty. \label{stop2} \tag{B}
\end{align}
\end{subequations}
The convergence results when executing the stopping criteria~(\ref{stop1}) and~(\ref{stop2}) for~(\ref{alm_subproblem}) can be obtained directly by adopting~\cite[Theorem 4, 5]{rockafellar1976augmented}.
However, the stopping criteria~(\ref{stop1}) and~(\ref{stop2}) are usually not in a direct usage due to the unknown value of $\inf f_{k}(X_D)$.
To obtain implementable stopping criteria, we need to find an upper bound for $f_{k} (X_D^{k+1}) - \inf f_{k}(X_D)$.
Since
\begin{equation*}
\inf f_k (X_D) = \sup g_k(X_P) \ge g_k(X_P^{k+1}),
\end{equation*}
we have
\begin{equation*}
f_{k} (X_D^{k+1}) - \inf f_k (X_D) \leq f_{k} (X_D^{k+1}) - g_k(X_P^{k+1}).
\end{equation*}
Hence, we terminate the subproblem \eqref{alm_subproblem} if $(X_D^{k+1}, X_P^{k+1})$ satisfies the following conditions:
given nonnegative summable sequences $\{\varepsilon_k\}$ and $\{\delta_k\}$ such that $\delta_k < 1$ for all $k \ge 0$,
\begin{subequations}
\begin{align}
& f_{k} (X_D^{k+1}) - g_k(X_P^{k+1}) \leq \frac{\varepsilon_k^2}{2 \sigma_k}, \, \varepsilon_{k} \ge 0, \, \sum_{k=0}^{\infty} \varepsilon_k < \infty,  \label{stop3} \tag{A$'$} \\
&  f_{k} (X_D^{k+1}) - g_k(X_P^{k+1}) \leq \frac{\delta_k^2}{2\sigma_k}\|X_P^{k+1} - X_P^{k}\|^2,  \, 0 \leq \delta_k \leq 1,\, \sum_{k=0}^{\infty} \delta_k < \infty. \label{stop4} \tag{B$'$}
\end{align}
\end{subequations}

\subsubsection{Convergence results of the ALM}

The global convergence and asymptotically superlinear convergence of the ALM have been carefully explored by~\cite{rockafellar1976augmented}, thus we can directly obtain the result without much effort.
Firstly, by denoting $f(X_P)$ as the objective function of~(\ref{P})
\begin{equation*}
f(X_P) = \langle S, \Theta \rangle - \log\det(\Theta) + Q(Z) + R(V) + \delta_{F_P}(\Theta, Z, V),
\end{equation*}
we can define the maximal monotone operator $\mathcal{T}_{f} := \partial f$
and its inverse operator
\begin{equation*}
\mathcal{T}^{-1}_{f}(U_P) = \underset{X_P}{\arg\min} \Big\{f_k(X_P) - \langle U_P, X_P \rangle \Big \}.
\end{equation*}
Now we are ready to present the convergence result of the ALM for solving the dual problem.

\begin{theorem}
(i) Let $\{(X_D^k, X_P^k)\}$ be the sequence generated by the ALM with the stopping criterion~(\ref{stop3}).
Then $\{(X_D^k, X_P^k)\}$ is bounded, $\{X_D^k\}$ converges to an optimal solution of (\ref{D}), and $\{X_P^k\}$ converges to an optimal solution of (\ref{P}).

(ii) Let $\rho$ be a positive number
such that $\rho > \sum_{k=0}^{\infty} \varepsilon_k$.
Assume that there exists $\kappa > 0$ such that
\begin{equation*}
\mathrm{dist}\big(X_P, \mathcal{T}_f^{-1}(0)\big) \leq \kappa \mathrm{dist}\big(0, \mathcal{T}_{f}(X_P)\big)
\end{equation*}
for all $X_P$ satisfying $\mathrm{dist}\big(X_P, \mathcal{T}_f^{-1}(0)\big) \leq \rho$.
Suppose that the initial point $X_P^0$ satisfies
\begin{equation*}
\mathrm{dist}\big(X_P^0, \mathcal{T}_f^{-1}(0)\big) \leq \rho - \sum_{k=0}^\infty \varepsilon_k.
\end{equation*}
Let $\{(X_D^k, X_P^k)\}$ be the sequence generated by the ALM with the stopping criteria (\ref{stop3}) and (\ref{stop4}).
Then for $k \ge 0$, it holds that
\begin{equation*}
\mathrm{dist}\big(X_P^{k+1}, \mathcal{T}_f^{-1}(0)\big) \leq \varrho_{k} \mathrm{dist}\big(X_P^{k} , \mathcal{T}_f^{-1}(0)\big),
\end{equation*}
where
\begin{eqnarray*}
\varrho_{k}:=\left[(1+\delta_{k})\kappa\left(\kappa^2+\sigma_{k}^2\right)^{-1 / 2}+\delta_{k}\right] /\left(1-\delta_k \right)\,\longrightarrow \varrho_{\infty}:=\kappa\left(\kappa^{2}+\sigma_{\infty}^{2}\right)^{-1 / 2},&\\ \left(\varrho_{\infty}=0 \text { if } \sigma_{\infty}=\infty\right).&
\end{eqnarray*}
\end{theorem}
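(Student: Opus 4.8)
The plan is to recognize the ALM for the dual problem (\ref{D}) as an inexact proximal point algorithm (PPA) applied to the maximal monotone operator $\mathcal{T}_f = \partial f$ on the primal side, and then to invoke the classical convergence theory of \cite{rockafellar1976augmented} essentially verbatim. First I would confirm that $f$ is a closed proper convex function: the term $\langle S,\Theta\rangle - \log\det(\Theta)$ is convex on the positive definite cone, $Q(\cdot)$ and $R(\cdot)$ are convex, and $\delta_{F_P}$ is the indicator of the affine subspace $F_P$; hence $\mathcal{T}_f = \partial f$ is maximal monotone, so that the resolvent $(\mathcal{I}+\sigma_k \mathcal{T}_f)^{-1}$ is single-valued and the PPA is well defined with $\mathcal{T}_f^{-1}(0)$ equal to the set of optimal primal solutions.

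The structural heart of the argument is to identify the update $X_P^{k+1}$ produced in Steps~2--3 of Algorithm~\ref{alg:2} with one inexact proximal step $X_P^{k+1} \approx (\mathcal{I}+\sigma_k \mathcal{T}_f)^{-1}(X_P^k)$. This follows from the minimax identity $\inf_{X_D} f_k(X_D) = \sup_{X_P} g_k(X_P)$ encoded in the definitions of $f_k$ and $g_k$: minimizing the $\sigma_k$-regularized Lagrangian over the dual block $X_D$ is dual to maximizing $g_k$ over $X_P$, and the maximizer of $g_k$ is exactly the proximal point of $X_P^k$ relative to $f$. Once this correspondence is in place, the two implementable stopping rules (\ref{stop3}) and (\ref{stop4}) imply Rockafellar's abstract criteria (\ref{stop1}) and (\ref{stop2}), since the excerpt already supplies the bound $f_k(X_D^{k+1}) - \inf f_k(X_D) \le f_k(X_D^{k+1}) - g_k(X_P^{k+1})$.

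With these pieces assembled, part (i) is a direct application of \cite[Theorem 4]{rockafellar1976augmented}: nonemptiness of the solution set of (\ref{P}) is equivalent to $\mathcal{T}_f^{-1}(0)\neq\emptyset$, and under criterion (\ref{stop3}) the PPA iterates $\{X_P^k\}$ converge to a point of $\mathcal{T}_f^{-1}(0)$, i.e.\ to an optimal solution of (\ref{P}), while the companion sequence $\{X_D^k\}$ converges to an optimal solution of (\ref{D}). For part (ii), the hypothesized error bound $\mathrm{dist}(X_P, \mathcal{T}_f^{-1}(0)) \le \kappa\,\mathrm{dist}(0, \mathcal{T}_f(X_P))$ near the solution set is precisely the metric subregularity assumption of \cite[Theorem 5]{rockafellar1976augmented}; combined with the starting-point condition on $\mathrm{dist}(X_P^0,\mathcal{T}_f^{-1}(0))$ and the summability of $\{\varepsilon_k\}$ and $\{\delta_k\}$, that theorem delivers the contraction estimate with factor $\varrho_k \to \varrho_\infty$, and $\varrho_\infty = 0$ when $\sigma_\infty = \infty$.

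The main obstacle I anticipate is not the invocation of Rockafellar's theorems, which is mechanical once the framework is fixed, but the careful verification of the dual--primal correspondence and of the minimax equality $\inf f_k = \sup g_k$. Establishing that equality rigorously requires a saddle-point argument for the $\sigma_k$-regularized Lagrangian, which draws on the constraint qualification \textbf{CQ} and on the coercivity induced by the proximal term; this is the step where the problem-specific structure, namely the constraint $\Theta = Z + \mathcal{T}V$ together with $\Theta \succeq 0$, must genuinely be used rather than merely cited.
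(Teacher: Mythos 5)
Your proposal matches the paper's own argument: the paper likewise treats the ALM as an inexact proximal point algorithm for $\mathcal{T}_f=\partial f$, uses the bound $f_k(X_D^{k+1})-\inf f_k(X_D)\le f_k(X_D^{k+1})-g_k(X_P^{k+1})$ to pass from the implementable criteria (\ref{stop3})--(\ref{stop4}) to Rockafellar's abstract ones, and then cites Theorems 4 and 5 of \cite{rockafellar1976augmented} directly. Your added caution about rigorously verifying $\inf f_k=\sup g_k$ via the constraint qualification is a sensible refinement, but the route is the same.
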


\subsubsection{The SSN for solving the ALM subproblem}\label{sec: ssn}

Solving the subproblem \eqref{eq:ALM subproblem2} is the most critical part for the efficiency of the algorithm.
By ignoring the superscript, the subproblem at each iteration can be formulated as
\begin{align}  \label{subproblem wrt Y}
\min_{Y \in \mathbb{S}^p} \Big\{\Phi(Y) :=\sigma \left[e_{h/\sigma}\left(S+Y - \frac{1}{\sigma} \overline{\Theta}\right) + e_{Q^*/\sigma}\left(Y+ \frac{1}{\sigma} \overline{Z}\right)\right.&\nonumber\\
 \left.+ e_{R^*/\sigma}\left(2Y + \frac{1}{\sigma} \overline{V}\right) \right]  -\frac{1}{2\sigma} (\|\overline{\Theta}\|^2 + \| \overline{Z} \|^2+ \| \overline{V}\|^2 ) - p \Big\},&
\end{align}
where $\overline{\Theta}, \overline{Z}, \overline{V}$ are fixed.

Note that $\Phi(\cdot)$ is strictly convex and continuously differentiable,
and for any $Y \in \mathbb{S}^p$,
\begin{equation*}
\begin{aligned}
\nabla \Phi(Y) = & \, -\phi^+_{\sigma}(-\sigma(S+Y)+\overline{\Theta})  + \textrm{prox}_{\sigma Q}(\sigma Y + \overline{Z})  +2\textrm{prox}_{\sigma R} (2\sigma Y + \overline{V}).
\end{aligned}
\end{equation*}
Thus, the unique solution $\overline{Y}$ of \eqref{subproblem wrt Y} can be obtained by solving the following nonsmooth system of equations
\begin{equation*}
\nabla \Phi(Y) = 0.
\end{equation*}
We adopt the SSN method to solve this nonsmooth system. However, it is difficult to characterize the structure of $\partial(\nabla \Phi)(Y)$ exactly.
Therefore, we also define a multifunction $\mathcal{V}$ as a surrogate of $\partial(\nabla \Phi)(Y)$.
The multifunction $\mathcal{V} \colon \mathbb{S}^p \rightrightarrows \mathbb{S}^{p}$ is defined as follows
\begin{equation*}
\begin{aligned}
\mathcal{V}(Y) := & ~\sigma \Big[(\phi^{+}_{\sigma})'(-\sigma(S+Y)+\overline{\Theta}) + \widehat{\partial} \textrm{prox}_{\sigma Q}(\sigma Y + \overline{Z}) + 4\widehat{\partial} \textrm{prox}_{\sigma R}(2\sigma Y+\overline{V}) \Big].
\end{aligned}
\end{equation*}
The SSN method is presented in Algorithm~\ref{alg:3}.
\begin{algorithm}[h]
\renewcommand{\algorithmicrequire}{\textbf{Input:}}
\renewcommand{\algorithmicensure}{\textbf{Output:}}
\caption{SSN for the ALM subproblem}
\label{alg:3}
\begin{algorithmic}[1]
\REQUIRE $\eta \in (0,1), \beta \in (0,1]$, $\mu \in (0, 1/2)$, and $\delta \in (0,1)$. Choose $Y^0 \in \mathbb{S}^p$ and  $j=0$.
\STATE Select $\mathcal{W}^Q\in\widehat{\partial} \textrm{prox}_{\sigma Q}(\sigma Y^{j} + \overline{Z})$, $\mathcal{W}^R\in\widehat{\partial} \textrm{prox}_{\sigma R}(2 \sigma Y^{j}+\overline{V})$. Let $W=\sigma [(\phi^{+}_{\sigma})'(-\sigma(S+Y^j)+\overline{\Theta}) + \mathcal{W}^Q + 4\mathcal{W}^R ]$. Solve the following linear system by the Preconditioned Conjugate Gradient (PCG) method (with the preconditioner from Section \ref{sec:Numerical issues})
\begin{equation} \label{linear systems for CG}
W[H] = - \nabla \Phi(Y^j)
\end{equation}
to find $H^j \in\mathbb{S}^p$ such that $\|W[H^j]+\nabla \Phi(Y^j)\| \leq \min(\eta, \|\nabla \Phi(Y^j)\|^{1+\beta})$.
\STATE Set $\alpha_j = \delta^{m_j}$ , where $m_j$ is the smallest nonnegative integer $m$ for which
\begin{equation*}
\Phi(Y^j + \delta^m H^j) \leq \Phi(Y^j)+\mu \delta^m \langle \nabla \Phi(Y^j), H^j \rangle.
\end{equation*}
\STATE Set $Y^{j+1} = Y^{j} +\alpha_j H^j$.
\STATE $j \leftarrow j+1$, go to Step 1.
\end{algorithmic}
\end{algorithm}

\begin{remark}
The parameter settings in the algorithms are designed to ensure convergence, not for tuning purposes. In practical computation, we set $\eta=10^{-1}$, $\beta=10^{-1}$, $\mu=10^{-3}$ and $\delta=\frac{1}{2}$.
\end{remark}

Similar to the proof of \cite[Proposition 2]{WangT2021}, we can prove that $\nabla \Phi$ is strongly semismooth in combination with Proposition \ref{prop:phi1} and Proposition \ref{prop:strongly-semismooth}, based on which we can get the following convergence result of the SSN method.
\begin{theorem}
Let $\{Y^{j}\}$ be the sequence generated by Algorithm~\ref{alg:3}, then $\{Y^j\}$ converges to the unique optimal solution $\overline{Y}$ of (\ref{subproblem wrt Y}), and the convergence rate is at least superlinear:
\begin{equation*}
\| Y^{j+1} - \overline{Y}\| = \mathcal{O}( \| Y^j - \overline{Y} \|^{1+\beta} ), \quad \beta \in (0, 1].
\end{equation*}
\end{theorem}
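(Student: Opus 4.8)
The plan is to verify the two structural hypotheses under which the globalized semismooth Newton method converges globally and locally superlinearly (the framework used, e.g., in \cite[Proposition 2]{WangT2021}): namely that $\nabla\Phi$ is strongly semismooth relative to the surrogate multifunction $\mathcal{V}$, and that every $W\in\mathcal{V}(Y)$ is symmetric positive definite, hence nonsingular. Granting these, the inexact Newton direction $H^{j}$ obtained from \eqref{linear systems for CG} is a descent direction, the Armijo line search in Step~2 of Algorithm~\ref{alg:3} is well defined, and the convergence of $\{Y^{j}\}$ to the unique minimizer $\overline{Y}$ of \eqref{subproblem wrt Y} follows from the strict convexity of $\Phi$ together with the existence of $\overline{Y}$; the rate estimate then comes from the local semismooth analysis.

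The first step is strong semismoothness of $\nabla\Phi$. Writing $\nabla\Phi(Y) = -\phi_{\sigma}^{+}(-\sigma(S+Y)+\overline{\Theta}) + \textrm{prox}_{\sigma Q}(\sigma Y+\overline{Z}) + 2\,\textrm{prox}_{\sigma R}(2\sigma Y+\overline{V})$, each summand is an affine precomposition of a strongly semismooth map: $\phi_{\sigma}^{+}$ is continuously differentiable with locally Lipschitz derivative by Proposition~\ref{prop:phi1}(ii), and $\textrm{prox}_{\sigma Q}$, $\textrm{prox}_{\sigma R}$ are strongly semismooth by Proposition~\ref{prop:strongly-semismooth}. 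Since strong semismoothness is preserved under affine composition and finite summation, $\nabla\Phi$ is strongly semismooth, and $\mathcal{V}$ assembles the corresponding (surrogate) generalized Jacobians with the chain rule applied only to the affine factors, exactly as in \cite[Proposition 2]{WangT2021}.

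The crux is to show each $W\in\mathcal{V}(Y)$ is symmetric positive definite. The first term $\sigma(\phi^{+}_{\sigma})'(\cdot)$ acts as $H\mapsto\sigma P(M\circ(P^{T}HP))P^{T}$ with all $M_{ij}>0$ by Proposition~\ref{prop:phi1}(ii), so $\langle H,\sigma(\phi^{+}_{\sigma})'(\cdot)[H]\rangle=\sigma\sum_{i,j}M_{ij}(P^{T}HP)_{ij}^{2}>0$ for $H\neq0$, making it symmetric positive definite. The $Q$-term $\mathcal{I}-\mathcal{P}^{*}\Xi\mathcal{P}$ is symmetric positive semidefinite because $0\preceq\Xi\preceq\mathcal{I}$ and $\mathcal{P}^{*}\mathcal{P}\preceq\mathcal{I}$ (since $\|\mathcal{P}H\|^{2}=\sum_{i\neq j}H_{ij}^{2}\leq\|H\|^{2}$). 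The delicate case is the $R$-term $\mathcal{W}^{R}=(\mathcal{I}-\mathcal{P}^{*}\Delta\mathcal{P})(\mathcal{I}-\mathcal{P}^{*}\Xi\mathcal{P})$, a product that is not manifestly symmetric. Here I would exploit the nesting of supports generated by the composition in Proposition~\ref{The composite of proximal operator}: column by column, $\mathcal{I}-\mathcal{P}^{*}\Xi\mathcal{P}$ restricts to the soft-thresholding support $T_{j}=\{i\colon|(\mathcal{P}_{j}Y)_{i}|>w_{1,j}\}$, while $X=\textrm{prox}_{\psi}(Y)$ is itself supported on $T_{j}$, so the block $\Delta_{j}=\partial\Pi_{\mathcal{B}_{2}^{w_{2,j}}}(\mathcal{P}_{j}X)$ preserves the splitting of $\mathbb{R}^{p}$ into $T_{j}$ and its complement and therefore commutes with the coordinate projection $\Pi_{T_{j}}$. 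Consequently each column block of $\mathcal{W}^{R}$ equals $\Pi_{T_{j}}(I-\Delta_{j})\Pi_{T_{j}}$, which is symmetric with $(I-\Delta_{j})|_{T_{j}}$ having eigenvalues in $[0,1]$; hence $\mathcal{W}^{R}$ is symmetric positive semidefinite. Summing the three pieces shows $W$ is symmetric positive definite, which both legitimizes the use of PCG in \eqref{linear systems for CG} and secures the descent property.

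Finally, for the local rate I would combine the strong semismoothness of $\nabla\Phi$ with the uniform boundedness of $W^{-1}$ near $\overline{Y}$ (from positive definiteness and outer semicontinuity of $\mathcal{V}$) and the inexactness tolerance $\|W[H^{j}]+\nabla\Phi(Y^{j})\|\leq\min(\eta,\|\nabla\Phi(Y^{j})\|^{1+\beta})$; using $\nabla\Phi(\overline{Y})=0$ and the semismooth estimate $\|\nabla\Phi(Y^{j})-\nabla\Phi(\overline{Y})-W(Y^{j}-\overline{Y})\|=O(\|Y^{j}-\overline{Y}\|^{2})$, one obtains $\|Y^{j}+H^{j}-\overline{Y}\|=O(\|Y^{j}-\overline{Y}\|^{1+\beta})$, after which a standard argument shows the unit step is eventually accepted so that $Y^{j+1}=Y^{j}+H^{j}$, giving the claimed bound. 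I expect the main obstacle to be precisely the symmetric positive definiteness of $\mathcal{V}$, and in particular proving that the product form $\mathcal{W}^{R}$ is symmetric and positive semidefinite through the support-nesting and commutativity argument, since everything else reduces to bookkeeping on top of the cited semismooth Newton theory and the identity $\widehat{\partial}\textrm{prox}_{R}(V)[H]=\partial\textrm{prox}_{R}(V)[H]$.
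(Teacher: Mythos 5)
Your proposal is correct and follows essentially the same route as the paper, which proves this theorem in one line by invoking the strong semismoothness of $\nabla\Phi$ (via Propositions~\ref{prop:phi1} and~\ref{prop:strongly-semismooth}, as in \cite[Proposition 2]{WangT2021}) and the standard inexact semismooth Newton convergence framework. You supply more detail than the paper does — in particular the verification that every $W\in\mathcal{V}(Y)$ is symmetric positive definite, where your support-nesting argument for the product $\mathcal{W}^{R}=(\mathcal{I}-\mathcal{P}^{*}\Delta\mathcal{P})(\mathcal{I}-\mathcal{P}^{*}\Xi\mathcal{P})$ (using that $\mathcal{P}_jX$ is supported exactly where $I-\Xi_j$ acts as the identity, so $(I-\Delta_j)(I-\Xi_j)$ is symmetric PSD columnwise) correctly settles a point the paper leaves implicit.
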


\section{Numerical issues for solving the subproblem \eqref{linear systems for CG}} \label{sec:Numerical issues}

The most important part in implementing Algorithm \ref{alg:3} is how to efficiently solve the subproblem \eqref{linear systems for CG}. The Newton system we need to solve is
\begin{eqnarray}
\label{eq:Newton system}
\Big[(\phi^{+}_{\sigma})'(-\sigma(S+Y)+\overline{\Theta})[H] + \mathcal{W}^{Q}(H) + 4\mathcal{W}^{R}(H) \Big] =
-\sigma^{-1}\nabla \Phi(Y),
\end{eqnarray}
where $\mathcal{W}^{Q}\in \widehat{\partial} \textrm{prox}_{\sigma Q}(\sigma Y + \overline{Z})$ and $\mathcal{W}^{R}\in \widehat{\partial} \textrm{prox}_{\sigma R}(2 \sigma Y+\overline{V})$. Here, we can choose $\mathcal{W}^{Q}$ and $\mathcal{W}^{R}$ such that
\begin{eqnarray*}
\mathcal{W}^{Q}(H) = H - U \circ H,
\end{eqnarray*}
where
\begin{eqnarray*}
U_{ij} = \left\{
\begin{array}{cl}
0, &\quad\text{if}~|(\sigma Y + \overline{Z})_{ij}| > \sigma\lambda_1 ~\text{or}~ i=j, \\
1, &\quad \text{otherwise},
\end{array}
\right.
\end{eqnarray*}
and
\begin{eqnarray*}
\mathcal{W}^{R}(H) = H - \Sigma^\psi\circ H - \mathcal{P}^* \Sigma^\varphi \mathcal{P}(H - \Sigma^\psi\circ H),
\end{eqnarray*}
where
\begin{eqnarray*}
\Sigma^\psi_{ij} = \left\{
\begin{array}{cl}
0, &\quad \text{if}~|(2 \sigma Y+\overline{V})_{ij}| > \sigma w_{1, j} ~\text{or}~ i = j, \\
1, &\quad \text{otherwise},
\end{array}
\right. \\
\end{eqnarray*}
and
\begin{eqnarray*}
\Sigma^\varphi = \textrm{Diag}(\Sigma^\varphi_{1},\ldots,\Sigma^\varphi_{p}),
\end{eqnarray*}
for $j=1,\ldots,p$,
\begin{gather} \nonumber
\begin{align}
 & \Sigma^\varphi_{j} =
\left\{
\begin{array}{cl}
\frac{\sigma w_{2,j}}{\|z_{j}\|}(I - \frac{z_{j}z_{j}^{T}}{\|z_{j}\|^2}), &\quad \text{if}~ \|z_{j}\| >  \sigma w_{2,j}, \\
I,  &\quad \text{if}~\|z_{j}\| \leq  \sigma w_{2,j},
\end{array}
\right.
\end{align}
\end{gather}
where $z_{j}=\mathcal{P}_{j}X$ with $X = 2 \sigma Y+\overline{V} - \mathcal{P}^* \operatorname{\Pi}_{\mathcal{B}^{w_1}_{\infty}} (\mathcal{P} (2 \sigma Y+\overline{V}))$.

We need to apply the PCG method to solve the system \eqref{eq:Newton system}. To achieve faster convergence, we hope to devise an easy-to-compute preconditioner. Similar to \cite{wang2010solving}, we define an operator $\mathcal{T}^{\phi}: \mathbb{S}^{p}\rightarrow\mathbb{S}^{p}$ as $\mathcal{T}^{\phi}(H):=P(M\circ(P^{T}HP))P^{T}$. As we know, the standard basis in $\mathbb{S}^{p}$ is given by $\{E_{ij}:=\alpha_{ij}(e_{i}e_{j}^{T}+e_{j}e_{i}^{T}): 1\leq i \leq j \leq p\}$, where $e_{i}$ is the \textit{i}-th unit vector in $\mathbb{R}^{p}$ and $\alpha_{ij}=1/\sqrt{2}$ if $i\neq j$ and $\alpha_{ij}=1/2$ otherwise. Let $\mathbf{T}^{\phi}, \mathbf{W}^{Q}$ and $\mathbf{W}^{R}$ denote the matrix representations of the linear operators $\mathcal{T}^{\phi}, \mathcal{W}^{Q}$ and $\mathcal{W}^{R}$, respectively. Then the diagonal element of $\textbf{T}^{\phi}$ with respect to the basis element $E_{ij}$ is
\begin{align}
  {\bf T}^{\phi}_{(ij),(ij)} =& \langle E_{ij}, \mathcal{T}^{\phi}(E_{ij})\rangle = \langle P^T E_{ij} P,M \circ (P^T E_{ij} P)\rangle \nonumber \\
  =&
  \left\{ \begin{array}{ll}
     ( (P\circ P) M (P\circ P)^T)_{ij} +
        \langle v^{(ij)},M v^{(ij)}\rangle, &\quad\mbox{if $i\not= j$}, \\[5pt]
        ( (P\circ P) M (P\circ P)^T)_{ij}, &\quad\mbox{otherwise},
        \end{array} \right.
        \label{eq-diag-T}
\end{align}
where $v^{(ij)} = P_i \circ P_j$ and $P_i,P_j$ are the $i$th and $j$th rows of $P$, respectively. The diagonal element of $\textbf{W}^{Q}$ with respect to the basis element $E_{ij}$ is
\begin{align}
\label{eq-diag-WQ}
\textbf{W}^{Q}_{(ij),(ij)}&=\langle E_{ij},\mathcal{W}^{Q}(E_{ij})\rangle =\langle E_{ij},E_{ij}-U\circ E_{ij}\rangle
=1-U_{ij}.
\end{align}
The diagonal element of $\textbf{W}^{R}$ with respect to the basis element $E_{ij}$ is
\begin{align}
\label{eq-diag-WR}
\textbf{W}^{R}_{(ij),(ij)} &= \langle E_{ij},\mathcal{W}^{Q}(E_{ij})\rangle \nonumber \\
&= \langle E_{ij},E_{ij}-\Sigma^{\psi}\circ E_{ij}-\mathcal{P}^{*}\Sigma^{\varphi}\mathcal{P}(E_{ij}- \Sigma^{\psi}\circ E_{ij})\rangle \nonumber \\
&= 1-\Sigma^{\psi}_{ij}-\langle E_{ij},\mathcal{P}^{*}\Sigma^{\varphi}\mathcal{P}(E_{ij}-\Sigma^{\psi}\circ E_{ij})\rangle.
\end{align}
In \eqref{eq-diag-T}, we note that to compute all the diagonal elements of $\textbf{T}^{\phi}$, the computational cost of the terms $\{\langle v^{(ij)},M v^{(ij)}\rangle\}_{i,j}$ is $O(p^{4})$ flops. Fortunately, the term $((P\circ P) M (P\circ P)^T)_{ij}$ is a very good approximation of ${\bf T}^{\phi}_{(ij),(ij)}$, whose computational cost is only $O(p^{3})$ flops for all $i,j$. Similarly, in \eqref{eq-diag-WR}, the computational cost of {the terms $\{\langle E_{ij},\mathcal{P}^{*}\Sigma^{\varphi}\mathcal{P}(E_{ij}-\Sigma^{\psi}\circ E_{ij})\rangle \}_{i,j}$ is also $O(p^{4})$ flops, and $1-\Sigma^{\psi}_{ij}$ is a good approximation to $\textbf{W}^{R}_{(ij),(ij)}$. Hence, we propose a preconditioner $\mathcal{V}_{D}:\mathbb{S}^{p}\rightarrow\mathbb{S}^{p}$, such that
\begin{eqnarray*}
\mathcal{V}_{D}(Y)[H]=D\circ H,
\end{eqnarray*}
where $D\in\mathbb{S}^{p}$ and $D_{ij}=((P\circ P) M (P\circ P)^T)_{ij}+1-U_{ij}+4(1-\Sigma^{\psi}_{ij})$, for $1\leq i, j \leq p$.

\section{Numerical experiments} \label{sec:Numerical experiments}
In this section, we demonstrate the performance of our two-phase algorithm.
The pADMM and dADMM serve as benchmark algorithms.
We evaluate the efficacy and efficiency of our algorithm on both synthetic and real-world data.
The algorithms are implemented in {\sc Matlab} R2019a.
All the numerical experiments in this paper are carried out on a laptop computer based on 64-bit $\rm Windows$10 system. The computer is configured as: $\rm Intel(R)$$\rm Core(TM) \ i5-6200U \ CPU\ @2.30GHz $$\rm 2.40GHz$, 4G running memory.

\subsection{Experimental setup} 

\subsubsection{Stopping criterion}
In our experiments, we define the $R_P$, $R_D$, and $R_C$ to quantify the infeasibilities of the primal and dual problems, and the complementarity condition,
where
{\small
\begin{equation*}
\begin{aligned}
R_P &:= \frac{\|\Theta - Z - V - V^T \|}{1+\|\Theta\|}, \quad R_D := \frac{\|S-\Omega+Y\|}{1+\|S\|}, \\
R_C &:= \max \left\{ \frac{\|\Theta\Omega - I\|}{1+\|\Theta\|+\|\Omega\|}, \frac{\|Z - \textrm{prox}_Q(Y+Z)\|}{1+\|Z\|},\,\frac{\|V - \textrm{prox}_R(V + \mathcal{T}^* Y)\|}{1+\|V\|} \right\}.
\end{aligned}
\end{equation*}
}We stop the algorithm when $R_P$, $R_D$, and $R_C$ achieve a sufficient precision:
\begin{equation*}
\max\left\{R_P, R_D, R_C \right\} < \texttt{Tol}=10^{-6}.
\end{equation*}
We also use these three precision metrics to determine when to start Phase I\!I of the algorithm.
Particularly, we switch to phase I\!I if $\max\{R_P, R_D, R_C\} < 10^{-4}$.
Moreover, we fix the maximum iteration numbers of the algorithms.
Specifically,
in our two-phase algorithm, the Phase I or Phase I\!I algorithm is terminated if the number of the iterations reaches 200.
In the benchmark algorithms, we confine the maximum number of iterations to 10,000.

\subsubsection{Performance measures}

Given examples $\mathbf{x}_{1}, \dots, \mathbf{x}_{n} \stackrel{\mathrm{i.i.d.}}{\sim} \mathcal{N}(\mathbf{0}, \Sigma)$ with $p$ features,
the goal of the DHGL is to estimate the inverse of the matrix $\Sigma$.
Let $\widehat{\Theta}$ be the final estimation of the inverse empirical covariance matrix,
we directly regard the entry of $\widehat{\Theta}$ whose value is less than a threshold $\epsilon$ as $0$, such that we can analyze its sparsity pattern.
In our experiments, the threshold $\epsilon$ is fixed to be $10^{-5}$.
We refer to the index set of the estimated hub nodes by $\widehat{\mathcal{H}}_r$,
and the rule of identifying a node as a hub is that it contains more than $r$ edges,
i.e.,
$\widehat{\mathcal{H}}_r = \big\{ i \mid \sum_{j=1, j \neq i}^p (1_{\{ |\Omega_{ij}| > \epsilon \}} ) > r \big\}$.
In our experiments, we simply set $r$ to be $p/5$.

\textbf{(1) On synthetic data}. The true covariance matrix $\Sigma$ is known. 
Thus the true hubs in the graph is available, and we denote the index set of the true hub nodes by $\mathcal{H}$.
Hence, we can use the following four most commonly used measurements to evaluate the efficacy of the algorithms.
\begin{itemize}
\item[(a)] Correctly estimated number of edges:
{\small
\begin{equation*}
\sum_{j<j'}\left(1_{\left\{\left|\Theta_{jj'}\right| > \epsilon ~\mathrm{and}~ \left|\Sigma^{-1}_{jj'}\right| \neq 0 \right\}}\right).
\end{equation*}
}
\item[(b)] Proportion of correctly estimated hub nodes:
{\small
\begin{equation*}
\frac{\left|\mathcal{H}_r \cap \mathcal{H}\right|}{\left|\mathcal{H}\right|}.
\end{equation*}
}
\item[(c)] Proportion of correctly estimated hub edges:
{\small
\begin{equation*}
 \frac{\sum\limits_{j \in \mathcal{H}, j' \neq j} \left(1_{\left\{\left|\Theta_{jj'}\right|>\epsilon~\mathrm{and}~ \left|\Sigma^{-1}_{jj'}\right| \neq 0 \right\}} \right)}{\sum\limits_{j \in \mathcal{H}, j' \neq j} \left(1_{\left\{\left|\Sigma^{-1}_{jj'}\right| \neq 0 \right\}}\right)}.
\end{equation*}
}
\item[(d)] The mean squared error:
\begin{equation*}
\mathcal{M}_{\rm MSE} := \frac{\sum\limits_{j < j'} {\left(\Theta_{jj'} - \Sigma^{-1}_{jj'} \right)}^2}{p(p-1)}.
\end{equation*}
\end{itemize}
\textbf{(2) On real-world data}. In the case of real data, the actual covariance matrix $\Sigma$  is unknown and cannot be evaluated using the previous four measurements, so we only compare the computational time for different algorithms.

\subsubsection{Hyperparameter tuning} \label{sec:hyperparameter tuning}

Similar to other machine learning models, the hyperparameter tuning also plays a critical role in the DHGL.
Tan et al. \cite{tan2014learning} proposed to select the hyperparameters by optimizing a Bayesian information criterion (BIC)-type quantity, but this quantity tends to yield
unnecessarily dense graphs when the dimension $p$ is large~\cite{drton2017structure}, and it also introduces two additional hyperparamters that are too sensitive to the quality of the solution.
As a result, instead of using the BIC-type quantity, we adopt the conventional \textit{k}-fold cross-validation~\cite{bickel2008regularized}, and set $k=5$ in our experiments. For very large problem, say $p=2500$, we adopt the $3$-fold cross-validation.

For each hyperparameter, we set different search scopes, respectively.
In our experiments,
we fix $\lambda_1 = 0.4$, and consider the cases where $\lambda_3 \in \{1, 1.5, 2\}$ and $\lambda_5 \in \{0.5, 1\}$.
For the tuning of $\lambda_2$ and $\lambda_4$, we use a relatively finer grid of their values.
For details, we select $\lambda_2$ from $[0.1, 0.5]$ and $\lambda_4$ from $[0.05, 0.15]$.

Another problem in the hyperparameter tuning is to specify the index set $\mathcal{D}$, which indicates the possible hubs.
Li, Bai, and Zhou \cite{li2018learning} recommended a two-stage  tuning process, which employs the GL or HGL before applying the DHGL to identify the hubs previously, and designed two individual strategies towards various cases where the hubs are known or unknown.
For completeness, we include their proposed strategies in Algorithms~\ref{alg:4} and~\ref{alg:5}.
Furthermore,
we note that the proposed strategies also alleviate the difficulty in hyperparameter selections
because the setting of the hyperparameters in the HGL can be directly adapted to the DHGL.

\begin{algorithm}[htb] \algsetup{linenosize=\small} \small
\renewcommand{\algorithmicrequire}{\textbf{Input:}}
\renewcommand{\algorithmicensure}{\textbf{Output:}}
\caption{{\small DHGL with known hubs}}
\label{alg:4}
\begin{algorithmic}[1]
\STATE Using the HGL with cross-validation to obtain the estimated hubs, denoted by $\mathcal{H}_{\rm HGL}$.
\STATE Set $\mathcal{D} = \mathcal{K} \setminus \mathcal{H}_{\rm HGL}$, where $\mathcal{K}$ is the set of known hubs.
\STATE If $\mathcal{D} \neq \emptyset$, get the estimation $\Theta$ by solving the DHGL, and denote the estimated hubs by $\mathcal{H}_{\rm DHGL}$, where $\lambda_1, \lambda_2, \lambda_3$ are directly set by the same values as those in the HGL and $\lambda_4, \lambda_5$ are selected based on the cross-validation.
Then let $\mathcal{H}_r = \mathcal{H}_{\rm HGL} \cup \mathcal{H}_{\rm DHGL}$ as the set of the estimated hubs.
If $\mathcal{D} = \emptyset$, use the estimation $\Theta$ of the HGL as the final estimation result, and let $\mathcal{H}_r = \mathcal{H}_{\rm HGL}$.
\end{algorithmic}
\end{algorithm}

\begin{algorithm}[htb] \algsetup{linenosize=\small} \small
\renewcommand{\algorithmicrequire}{\textbf{Input:}}
\renewcommand{\algorithmicensure}{\textbf{Output:}}
\caption{{\small DHGL with unknown hubs}}
\label{alg:5}
\begin{algorithmic}[1]
\STATE Using the HGL with cross-validation to obtain the estimated hubs, denoted by $\mathcal{H}_{\rm HGL}$.
\STATE (Prior Information Construction) Tune the regularization parameter $\lambda$ of the GL from large to small until $|\mathcal{H}_{\rm GL} \setminus \mathcal{H}_{\rm HGL}| > 0$ and $|\mathcal{H}_{\rm GL} \cup \mathcal{H}_{\rm HGL}| \leq \max\{|\mathcal{H}_{\rm HGL}+a, b | \mathcal{H}_{\rm HGL}|\}$, where $a, b$ are typically set to be $2$ and $1.1$.
\STATE Set $\mathcal{D} = \mathcal{H}_{\rm GL} \setminus \mathcal{H}_{\rm HGL}$ which is nonempty.
\STATE Use the DHGL to estimate $\Theta$, where $\lambda_1, \lambda_2, \lambda_3$ remain the same values as those in the HGL, $\lambda_4$ is equally set to be $\lambda_2$, and $\lambda_5$ is selected using the cross-validation.
\end{algorithmic}
\end{algorithm}

\subsection{Experiments on synthetic data}
\label{subsec:synthetic-xperiments}

We consider two cases (the hubs are known or unknown) and apply the corresponding strategies. We apply three different algorithms to solve the core optimization problems (i.e., HGL and DHGL).

Now, we introduce three simulation settings used for the evaluation.
We first randomly generate a $p \times p$ adjacency matrix on the basis of the following three set-ups.
\begin{itemize}
\item[I -] Network with hub nodes.
We randomly select $|\mathcal{H}|$ nodes as the hubs.
For any $i < j$, we set the probability of $A_{ij} = 1$ to be $0.7$ if the corresponding node is the hub and to be $0.02$ otherwise.
That is, for any $i<j$, we have
\begin{equation*}
\mathrm{Prob}(A_{ij} = 1) =
\left\{
\begin{array}{ll}
0.7, &\quad\text{if}~i \in \mathcal{H}, \\
0.02, &\quad\text{otherwise}.
\end{array}
\right.
\end{equation*}
Finally, the adjacency matrix $A$ is completed  by setting $A_{ji} = A_{ij}$ for $i=1,\dots,p$.
\item[I\!I -] Network consisting of two connected subnetworks with hub nodes.
The adjacency matrix $A = \begin{pmatrix} A_1 & 0 \\ 0 & A_2 \end{pmatrix}$, where $A_1$ and $A_2$ are generated in accordance with the set-up I.
\item[I\!I\!I -] Scale-free network. For any given node, the probability that it has $k$ edges is proportional to $k^{-\alpha}$.
For this simulation setting, we use the graph library NetworkX \cite{hagberg2008exploring} to generate the adjacency matrix. Here we set $\alpha=2.5$.
\end{itemize}

Next, we use the adjacency matrix $A$ to create a matrix $E$.
To be specific, for any $(i,j)$ such that $A_{ij} \neq 0$, let $E_{ij}$ follow a uniform distribution with the interval $[-0.75,-0.25] \cup[0.25,0.75]$, i.e.,
in other words,
\begin{equation*}
E_{i j} \stackrel{\text {i.i.d.}}{\sim}\left\{\begin{array}{ll}
0, &\quad \text {if}~A_{ij}=0, \\
\operatorname{Unif}([-0.75,-0.25] \cup[0.25,0.75]), &\quad \text{otherwise.}
\end{array}\right.
\end{equation*}

\begin{figure}[h]
	\begin{center}
		\subfigure[Set-up I]{
			\includegraphics[width=.35\columnwidth]{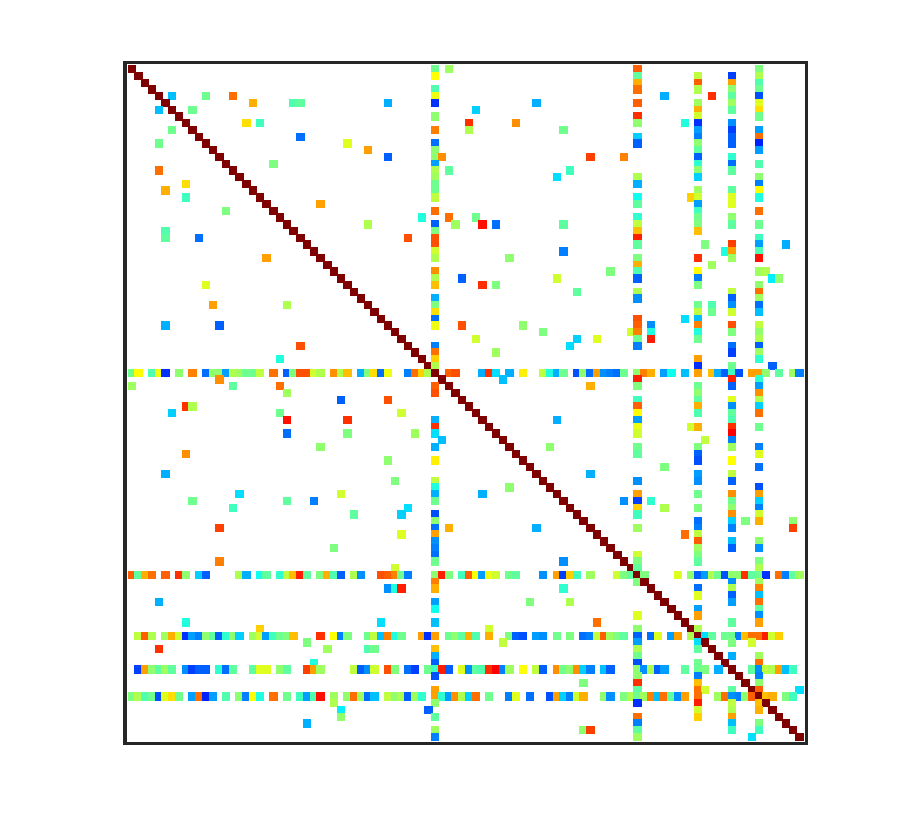}}
		\subfigure[Set-up I\!I]{
			\includegraphics[width=.35\columnwidth]{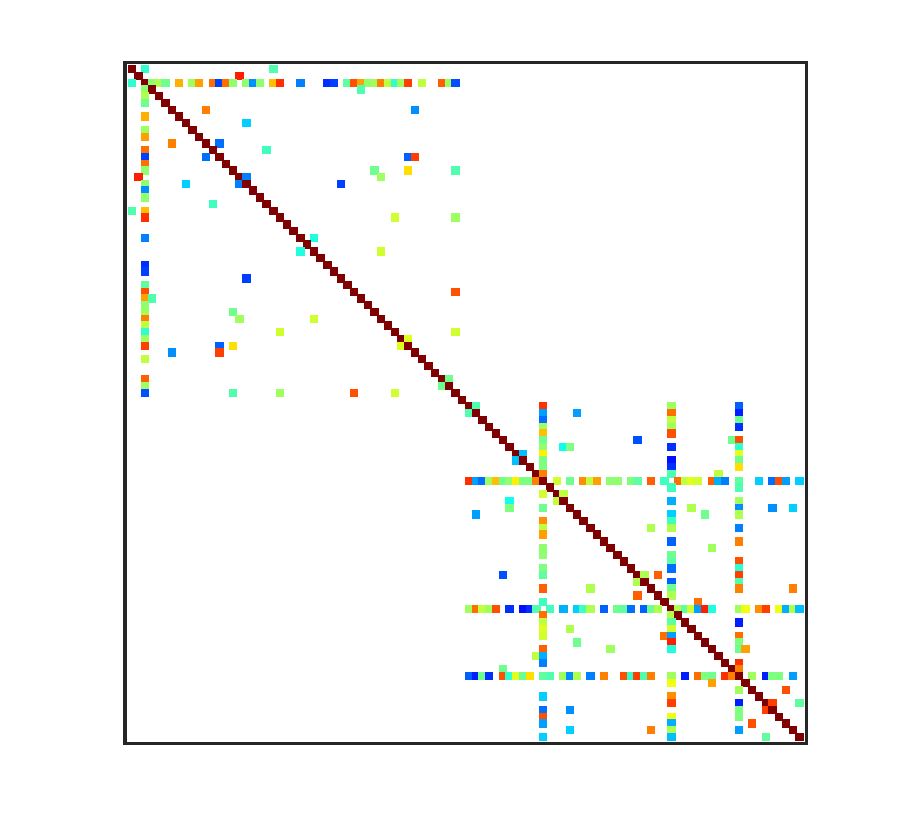}}
		\subfigure[Set-up I\!I\!I]{
			\includegraphics[width=.35\columnwidth]{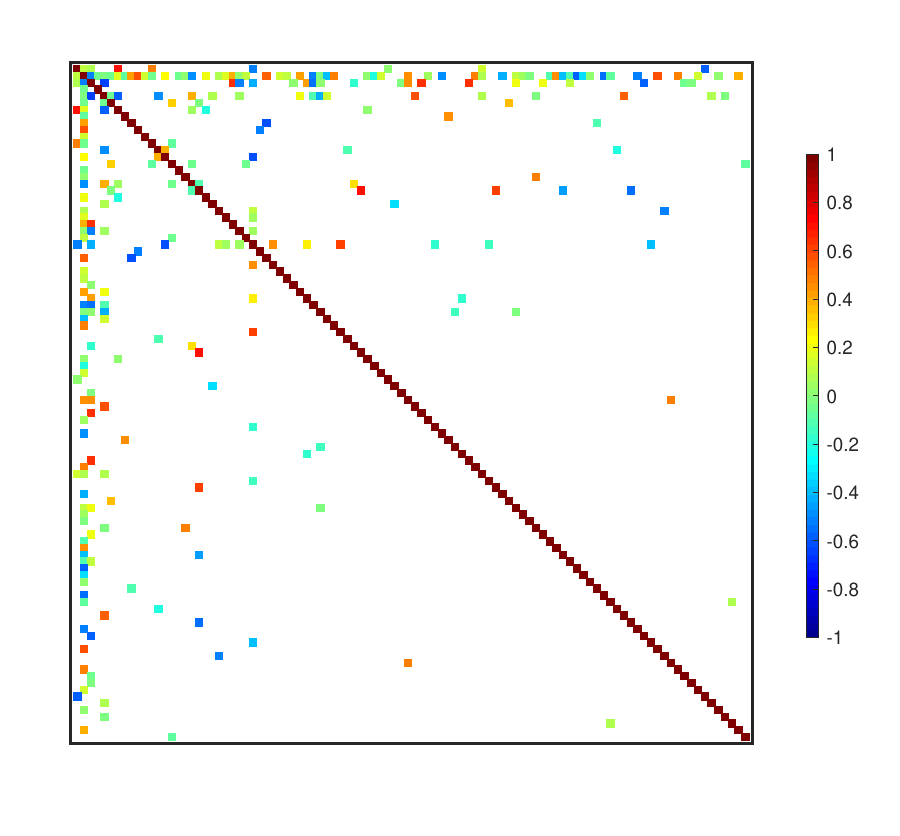}}
		\caption{Examples of the inverse covariance matrix in three set-ups.}
		\label{fig:adj_exp}
	\end{center}
\end{figure}

Then, we can obtain a symmetric matrix $\overline{E}$ by setting $\overline{E} = (E+E^T)/2$.
The inverse covariance matrix is constructed by setting $\Sigma^{-1} = \overline{E}+ (0.1 - \rho_{\min}(\overline{E}))I$, where $\rho_{\min}(\overline{E})$ indicates the smallest eigenvalue of $\overline{E}$.
Now, we can generate the data matrix $X=(\mathbf{x}_1; \dots; \mathbf{x}_n)$, where each row is sampled from the normal distribution $\mathcal{N}(0, \Sigma)$.
Finally, we conduct an additional standardization, such that the standard deviation of each feature is equal to one.
To clearly illustrate the generated inverse covariance matrix $\Sigma^{-1}$, we also give an example in Figure~\ref{fig:adj_exp} for each set-up.

For the generated data matrix $X \in \mathbb{R}^{n \times p}$, we considered the following seven cases:
\begin{eqnarray*}
(n, p)\in\Big\{(100, 300), (300, 500), (500, 800), (800, 1000),  (1000, 1500),&\\ (1500, 2000), (2000, 2500) \Big\}.&
\end{eqnarray*}
We also need to set the number of the true hub nodes
(i.e., the cardinality of the hub index set $|\mathcal{H}|$)
in set-up I and I\!I.
In our experiments, we set $|\mathcal{H}|$ to be $5$, $5$, $10$, $10$,  $30$, $30$ and $30$, respectively for each case.
Besides, the cardinality $|\mathcal{H}|$ in set-up I\!I\!I is dependent on the generated adjacency matrix rather than our setting.
In this simulation setting, a node will be identified as a hub if it has more than $r$ edges, i.e.,
$\mathcal{H} = \left\{ i\,\left|\, \sum_{j=1, j \neq i}^p A_{ij} > r,  i=1,\dots, p \right.\right\}$, and we also set $r = p/5$.

\subsubsection{Experiments with known hub nodes}

When the hub nodes are known, for the DHGL problem, the numerical performances of the pADMM, dADMM and two-phase algorithms on the synthetic data of simulation settings I, I\!I and I\!I\!I are compared. The specific numerical performances are listed in the tables below, where ``P'', ``D'', ``T'', ``T(I)'' and ``T(I\!I)'' denote the pADMM, dADMM, two-phase algorithm, Phase I algorithm and Phase I\!I algorithm, respectively, and the computational time is recorded in the format of ``hours:minutes:seconds''. For ``iter" of T(I\!I), the number in the parenthesis denotes the number of inner iterations of Phase I\!I.  As can be seen from the numerical test results in Tables 1-3, for the synthetic data of known hubs, the two-phase algorithm is more efficient than the pADMM and dADMM in all the cases.
\begin{table}[H]
	\setlength{\tabcolsep}{.35em}
	\footnotesize
	\parbox{0.98\textwidth}
	\centering
	\caption{In simulation setting I, the test results for the DHGL problem in which the hub nodes are known}
	\resizebox{0.98\textwidth}{15mm}{
		\begin{tabular}{|c|c|c|c|c|c|c|c|c|c|c|}
			\hline
			\multirow{2}*{$(n, p)$} & \multicolumn{3}{c|}{max\{$R_{P},R_{D},R_{C}$\}} & \multicolumn{4}{c|}{iter} & \multicolumn{3}{c|}{time}\\
			\cline{2-11}
			& P & D & T  & P & D & T(I) & T(I\!I) & P & D & T \\
			\hline
			$(100, 300)$ & 9.84e-07 & 9.90e-07 & 1.29e-07 & 471 & 594 & 200 & 12(19) & 0:00:11 & 00:00:18 & 00:00:10 \\
			\hline
			$(300, 500)$ & 9.34e-07 & 9.89e-07 & 1.14e-07  & 518 & 329 & 126 & 7(8) & 00:00:53 & 00:00:39 & 00:00:19\\
			\hline
			$(500, 800)$ & 9.33e-07 & 9.93e-07 & 1.91e-07  & 781 & 339 & 135 & 5(6) & 00:04:13 & 00:02:25 & 00:01:09\\
			\hline
			$(800, 1000)$ & 9.81e-07 & 9.70e-07 & 9.80e-07 & 955 & 366 & 126 & 4(5) & 00:08:42 & 00:06:02 & 00:01:35\\
			\hline
			$(1000, 1500)$ & 9.90e-07 & 9.94e-07 & 8.87e-07  & 1378 & 900 & 200 & 37(45) & 00:33:21 & 00:27:34 & 00:19:28\\
			\hline
			$(1500, 2000)$ & 1.00e-06 & 9.83e-07 & 5.01e-07 & 1609 & 646 & 200 & 12(13) & 01:19:47 & 00:39:53 & 00:18:16\\
			\hline
			$(2000, 2500)$ & 9.99e-07 & 9.96e-07 & 5.12e-07& 2857 & 1693 & 200 & 31(44) & 04:07:22 & 03:09:18 & 01:06:12\\
			\hline
		\end{tabular}
		\label{tab:exp_k1}
	}
\end{table}

\begin{table}[H]
	\setlength{\tabcolsep}{.35em}
	\footnotesize
	\parbox{0.98\textwidth}
	\centering
	\caption{In simulation setting I\!I, the test results for the DHGL problem in which the hub nodes are known}
	\resizebox{0.98\textwidth}{15mm}{
		\begin{tabular}{|c|c|c|c|c|c|c|c|c|c|c|}
			\hline
			\multirow{2}*{$(n, p)$} & \multicolumn{3}{c|}{max\{$R_{P},R_{D},R_{C}$\}}  & \multicolumn{4}{c|}{iter} & \multicolumn{3}{c|}{time}\\
			\cline{2-11}
			& P & D & T & P & D & T(I) & T(I\!I) & P & D & T \\
			\hline
			$(100, 300)$ & 9.99e-07 & 9.89e-07 & 5.62e-07  & 490 & 317 & 200& 12(16)  & 00:00:12 & 00:00:10 & 00:00:07\\
			\hline
			$(300, 500)$  & 9.96e-07 & 9.84e-07 & 8.21e-07  & 683 & 372 & 200 & 12(14)  & 00:01:15 & 00:00:44 & 00:00:31\\
			\hline
			$(500, 800)$ & 9.99e-07 & 9.89e-07 & 3.85e-07   & 781 & 484 & 200 & 13(22)  & 00:04:16 & 00:03:11 & 00:02:15\\
			\hline
			$(800, 1000)$ & 9.96e-07 & 9.84e-07 & 7.19e-07   & 1056 & 551 & 200 & 13(20) & 00:09:30 & 00:06:11 & 00:03:41 \\
			\hline
			$(1000, 1500)$ & 9.95e-07 & 9.91e-07 & 4.71e-07   & 1293 & 908 & 200 & 15(23) & 00:31:02 & 00:27:12 & 00:11:21\\
			\hline
			$(1500, 2000)$ & 9.98e-07 & 9.96e-07 & 7.95e-07   & 2012 & 1112 & 200 & 15(16) & 01:50:30 & 01:09:27 & 00:17:12\\
			\hline
			$(2000, 2500)$ & 9.98e-07 & 9.99e-07 & 5.13e-07  & 1764 & 1367 & 200 & 18(34) & 02:55:01 & 02:36:06 & 01:04:21\\
			\hline
		\end{tabular}
		\label{tab:exp_k2}
	}
\end{table}

\begin{table}[H]
	\setlength{\tabcolsep}{.35em}
	\footnotesize
	\parbox{0.98\textwidth}
	\centering
	\caption{In simulation setting I\!I\!I, the test results for the DHGL problem in which the hub nodes are known}
	\resizebox{0.98\textwidth}{15mm}{
		\begin{tabular}{|c|c|c|c|c|c|c|c|c|c|c|}
			\hline
			\multirow{2}*{$(n, p)$} & \multicolumn{3}{c|}{max\{$R_{P},R_{D},R_{C}$\}}  & \multicolumn{4}{c|}{iter} & \multicolumn{3}{c|}{time}\\
			\cline{2-11}
			& P & D & T & P & D & T(I) & T(I\!I) & P & D & T \\
			\hline
			$(100, 300)$  & 9.89e-07 & 9.77e-07 & 3.59e-07  & 449 & 338 & 200 & 13(16)  & 00:00:10 & 00:00:11 & 00:00:09\\
			\hline
			$(300, 500)$  & 9.99e-07 & 9.82e-07 & 2.67e-07   & 714 & 359 & 200 & 12(13)  & 00:01:15 & 00:00:49 & 00:00:35\\
			\hline
			$(500, 800)$  & 9.93e-07 & 9.90e-07 & 5.01e-07  & 688 & 505 & 200 & 12(12)  & 00:03:43 & 00:03:54 & 00:01:52\\
			\hline
			$(800, 1000)$  & 9.96e-07 & 9.84e-07 & 3.60e-07 & 1058 & 546 & 200 & 13(13) & 00:09:52 & 00:09:19 & 00:03:13 \\
			\hline
			$(1000, 1500)$ & 9.93e-07 & 9.97e-07 & 3.13e-07  & 778 & 606 & 200 & 14(20) & 00:20:18 & 00:21:09 & 00:09:50\\
			\hline
			$(1500, 2000)$ & 9.98e-07 & 9.96e-07 & 4.09e-07   & 1401 & 968 & 200 & 14(14) & 01:15:24 & 01:22:27 & 00:21:02\\
			\hline
			$(2000, 2500)$ & 9.95e-07 & 9.94e-07 & 5.07e-07   & 2609 & 1402 & 200 & 17(21) & 04:30:37 & 03:48:58 & 01:11:18\\
			\hline
		\end{tabular}
		\label{tab:exp_k3}
	}
\end{table}

\subsubsection{Results with unknown hub nodes}

When the hub nodes are unknown, for the DHGL problem, the numerical performances of the pADMM, dADMM and two-phase algorithms on the synthetic data of simulation Settings I, I\!I and I\!I\!I are compared. The specific numerical performances are shown in the tables below. As can be seen from the numerical test results in Tables 4-6, for the synthetic data of unknown hubs, the two-phase algorithm is the most efficient algorithm in all the cases.
\begin{table}[H]
	\setlength{\tabcolsep}{.35em}
	\footnotesize
	\parbox{0.98\textwidth}
	\centering
	\caption{In simulation setting I, the test results for the DHGL problem of the hub node are unknown}
	\resizebox{0.98\textwidth}{15mm}{
		\begin{tabular}{|c|c|c|c|c|c|c|c|c|c|c|}
			\hline
			\multirow{2}*{$(n, p)$} & \multicolumn{3}{c|}{max\{$R_{P},R_{D},R_{C}$\}}  & \multicolumn{4}{c|}{iter} & \multicolumn{3}{c|}{time}\\
			\cline{2-11}
			& P & D & T  & P & D & T(I) & T(I\!I) & P & D & T \\
			\hline
			$(100, 300)$ & 9.92e-07 & 9.98e-07 & 2.34e-07 & 737 & 483 & 200 & 12(21) & 00:00:19 & 00:00:15 & 00:00:11 \\
			\hline
			$(300, 500)$ & 9.93e-07 & 9.92e-07 & 5.44e-07  & 1038 & 636 & 200 & 12(12) & 00:01:50 & 00:01:22 & 00:00:31\\
			\hline
			$(500, 800)$ & 9.98e-07 & 9.92e-07 & 3.94e-07  & 1443 & 954 & 200 & 14(14) & 00:07:55 & 00:06:55 & 00:01:50\\
			\hline
			$(800, 1000)$ & 9.96e-07 & 8.96e-07 & 7.50e-07  & 1610 & 976 & 200 & 14(14) & 00:15:02 & 00:12:10 & 00:04:17\\
			\hline
			$(1000, 1500)$ & 1.00e-06 & 9.95e-07 & 2.52e-07  & 2238 & 1579 & 200 & 19(24) & 00:55:42 & 00:51:59 & 00:12:12\\
			\hline
			$(1500, 2000)$ & 9.98e-07 & 9.95e-07 & 4.43e-07  & 2784 & 2003 & 200 & 17(17) & 02:27:29 & 02:51:18 & 00:21:21\\
			\hline
			$(2000, 2500)$ & 9.99e-07 & 1.00e-06 & 4.96e-07  & 3954 & 2505 & 200 & 19(53) & 06:31:37 & 05:45:03 & 02:23:26\\
			\hline
		\end{tabular}
		\label{tab:exp_unk1}
	}
\end{table}

\begin{table}[H]
	\setlength{\tabcolsep}{.35em}
	\footnotesize
	\parbox{0.98\textwidth}
	\centering
	\caption{In simulation setting I\!I, the test results for the DHGL problem of the hub node are unknown}
	\resizebox{0.98\textwidth}{15mm}{
		\begin{tabular}{|c|c|c|c|c|c|c|c|c|c|c|}
			\hline
			\multirow{2}*{$(n, p)$} & \multicolumn{3}{c|}{max\{$R_{P},R_{D},R_{C}$\}}  & \multicolumn{4}{c|}{iter} & \multicolumn{3}{c|}{time}\\
			\cline{2-11}
			& P & D & T & P & D & T(I) & T(I\!I) & P & D & T \\
			\hline
			$(100, 300)$ & 9.87e-07 & 9.93e-07 & 4.54e-07  & 658 & 418 & 200& 11(11)  & 00:00:16 & 00:00:12 & 00:00:08\\
			\hline
			$(300, 500)$  & 9.95e-07 & 9.84e-07 & 4.40e-07  & 861 & 372 & 200 & 12(19)  & 00:01:29 & 00:00:45 & 00:00:36\\
			\hline
			$(500, 800)$ & 9.92e-07 & 9.89e-07 & 2.06e-07   & 929 & 484 & 200 & 13(13)  & 00:04:57 & 00:03:21 & 00:01:38\\
			\hline
			$(800, 1000)$ & 9.97e-07 & 9.84e-07 & 3.79e-07    & 1043 & 551 & 200 & 13(13) & 00:12:52 & 00:09:35 & 00:02:48 \\
			\hline
			$(1000, 1500)$ & 9.94e-07 & 9.91e-07 & 2.51e-07  & 1531 & 908 & 200 & 15(15) & 00:39:33 & 00:32:09 & 00:07:34\\
			\hline
			$(1500, 2000)$ & 9.96e-07 & 9.96e-07 & 4.25e-07  & 1860 & 1112 & 200 & 15(15) & 01:43:56 & 01:15:23 & 00:15:25\\
			\hline
			$(2000, 2500)$ & 9.98e-07 & 9.99e-07 & 2.77e-07 & 2446 & 1367 & 200 & 18(18) & 04:01:29 & 02:59:30 & 00:34:40\\
			\hline
		\end{tabular}
		\label{tab:exp_unk2}
	}
\end{table}

\begin{table}[H]
	\setlength{\tabcolsep}{.35em}
	\footnotesize
	\parbox{0.98\textwidth}
	\centering
	\caption{In simulation setting I\!I\!I, the test results for the DHGL problem of the hub node are unknown}
	\resizebox{0.98\textwidth}{15mm}{
		\begin{tabular}{|c|c|c|c|c|c|c|c|c|c|c|}
			\hline
			\multirow{2}*{$(n, p)$} & \multicolumn{3}{c|}{max\{$R_{P},R_{D},R_{C}$\}} & \multicolumn{4}{c|}{iter} & \multicolumn{3}{c|}{time}\\
			\cline{2-11}
			& P & D & T  & P & D & T(I) & T(I\!I) & P & D & T \\
			\hline
			$(100, 300)$ & 9.90e-07 & 9.99e-07 & 5.18e-07  & 438 & 259 & 200 & 10(10)  & 00:00:10 & 00:00:09 & 00:00:08\\
			\hline
			$(300, 500)$  & 9.92e-07 & 9.95e-07 & 4.67e-07   & 591 & 380 & 200 & 10(16)  & 00:01:01 & 00:00:53 & 00:00:42\\
			\hline
			$(500, 800)$ & 9.92e-07 & 9.83e-07 & 2.34e-07   & 1051 & 485 & 200 & 12(12)  & 00:05:39 & 00:04:18 & 00:01:53\\
			\hline
			$(800, 1000)$ & 9.96e-07 & 9.94e-07 & 3.42e-07   & 959 & 557 & 200 & 12(12) & 00:08:53 & 00:06:57 & 00:03:29 \\
			\hline
			$(1000, 1500)$ & 9.97e-07 & 9.96e-07 & 3.18e-07   & 1341 & 937 & 200 & 14(14) & 00:35:25 & 00:39:04 & 00:09:12\\
			\hline
			$(1500, 2000)$ & 9.98e-07 & 9.92e-07 & 4.34e-07   & 1470 & 853 & 200 & 15(15) & 01:22:28 & 01:14:32 & 00:22:33\\
			\hline
			$(2000, 2500)$ & 9.98e-07 & 9.89e-07 & 2.80e-07 & 2596 & 793 & 200 & 16(16) & 04:06:21 & 01:30:01 & 00:29:11\\
			\hline
		\end{tabular}
		\label{tab:exp_unk3}
	}
\end{table}

In order to explain the graph recovery of the three algorithms more intuitively, we select an example with dimension $p=300$ for the simulation setting I in Section \ref{subsec:synthetic-xperiments}. As shown in Figure \ref{fig:res_exp}, the ground truth matrix contains five hubs, and the estimations obtained via pADMM, dADMM, and the two-phase algorithm all correctly identify five hubs, as they are solving the same DHGL model.

\begin{figure}[h]
	\begin{center}
		\subfigure[ ]{
			\includegraphics[width=.35\columnwidth]{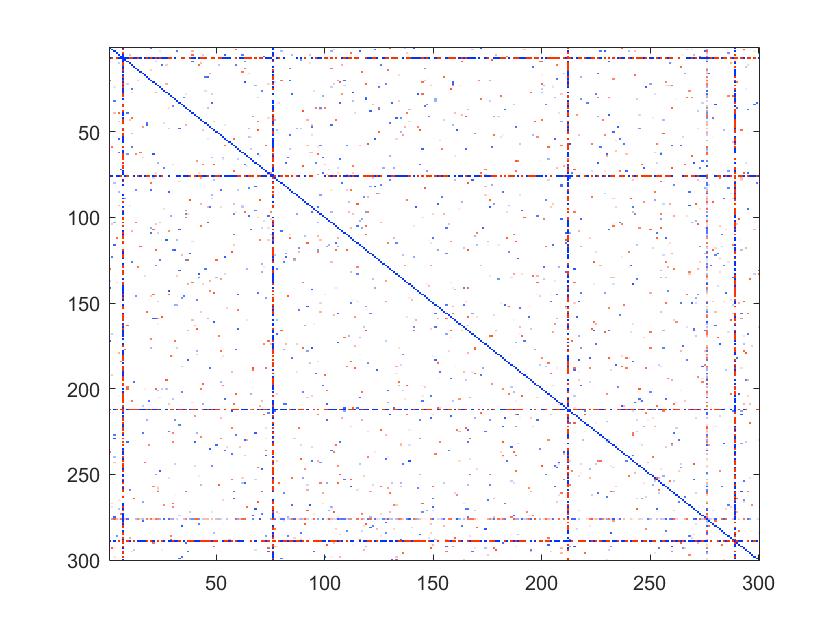}}
		\subfigure[ ]{
			\includegraphics[width=.35\columnwidth]{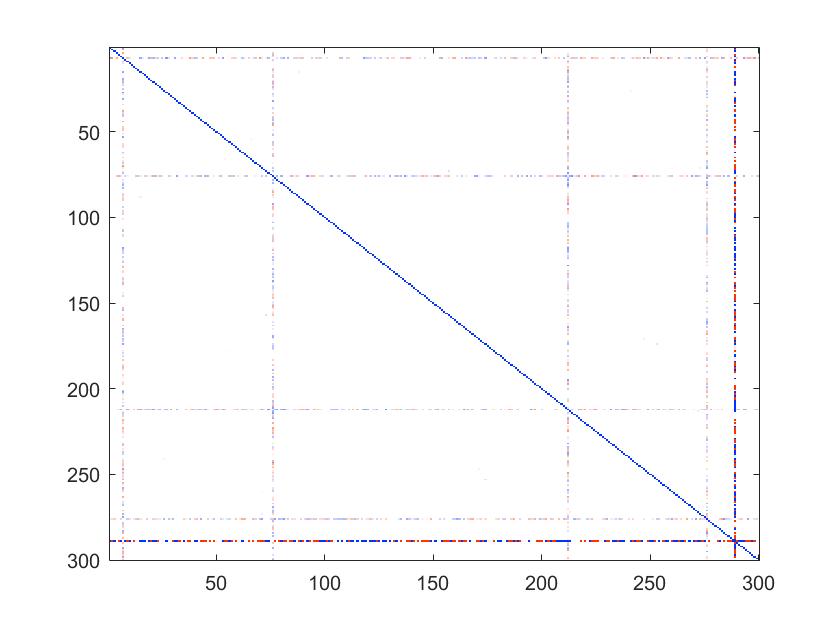}}
		\subfigure[ ]{
			\includegraphics[width=.35\columnwidth]{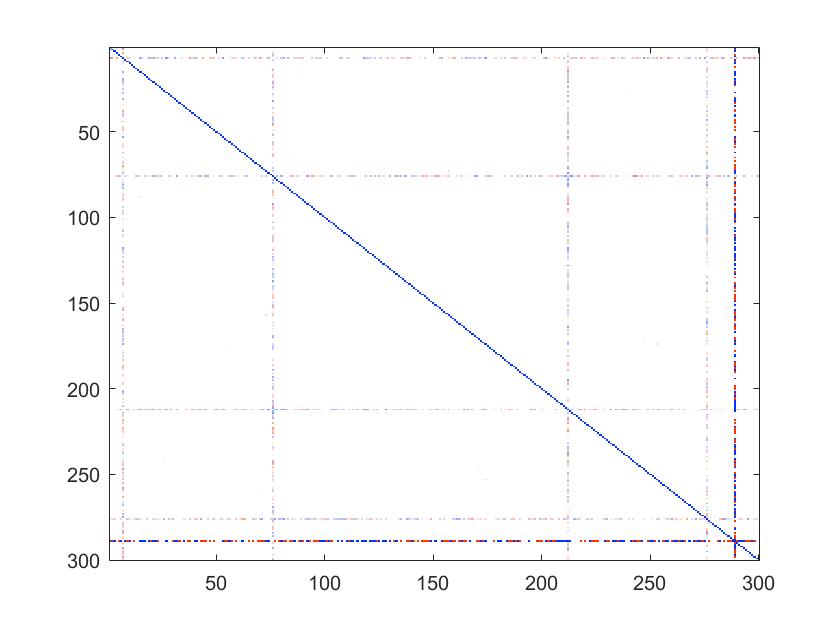}}
        \subfigure[ ]{
			\includegraphics[width=.35\columnwidth]{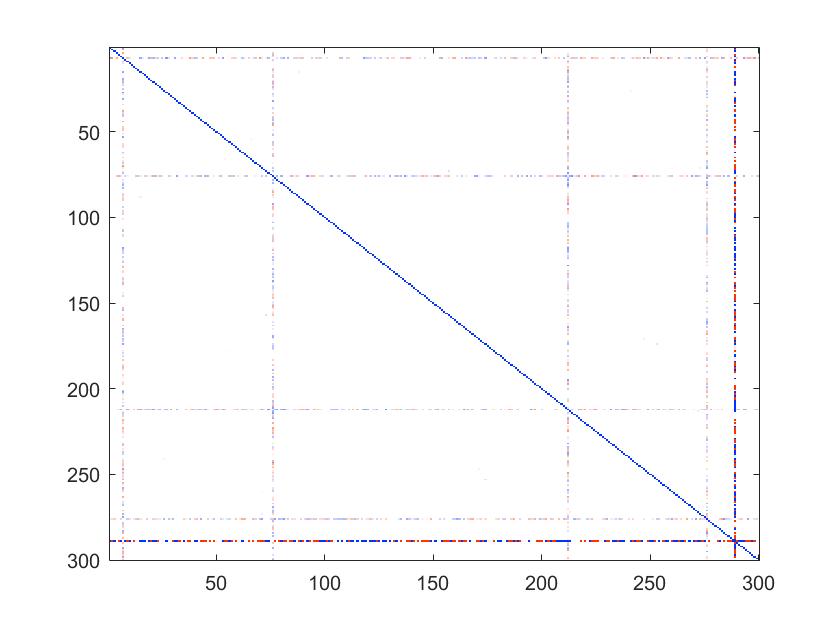}}
\caption{The adjacency matrix of graph estimation by using three algorithms.
The four subfigures are: (a) the true adjacency matrix (i.e., ground truth), (b) the estimated result by the pADMM, (c) the estimated result by the dADMM, and (d) the estimated result by the two-phase algorithm, respectively.
}
		\label{fig:res_exp}
	\end{center}
\end{figure}

\begin{figure}[H]
	\centering
	\vspace{-0.55in}
	\begin{minipage}{1\linewidth}
		\includegraphics[width=0.45\linewidth]{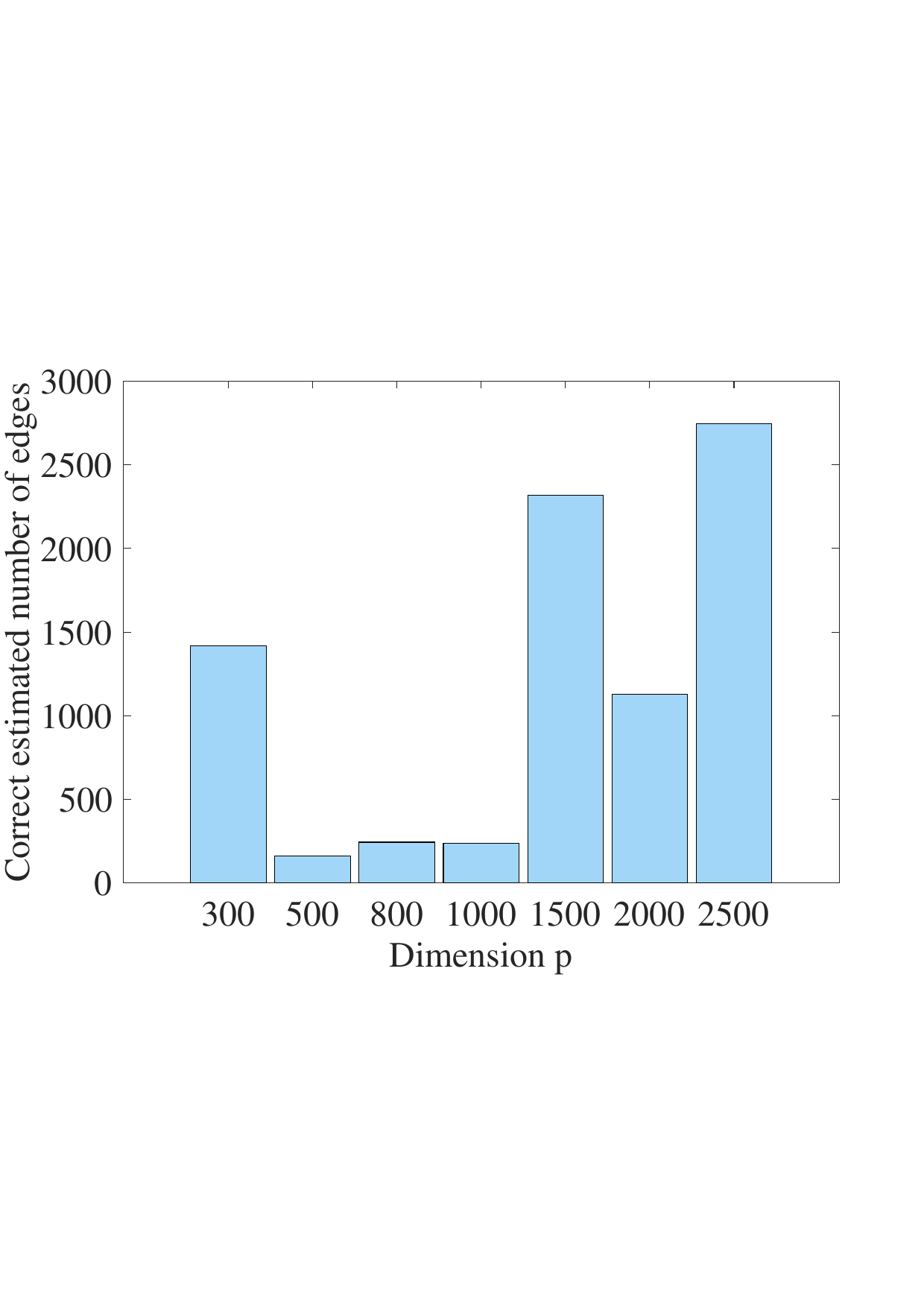}
		\includegraphics[width=0.45\linewidth]{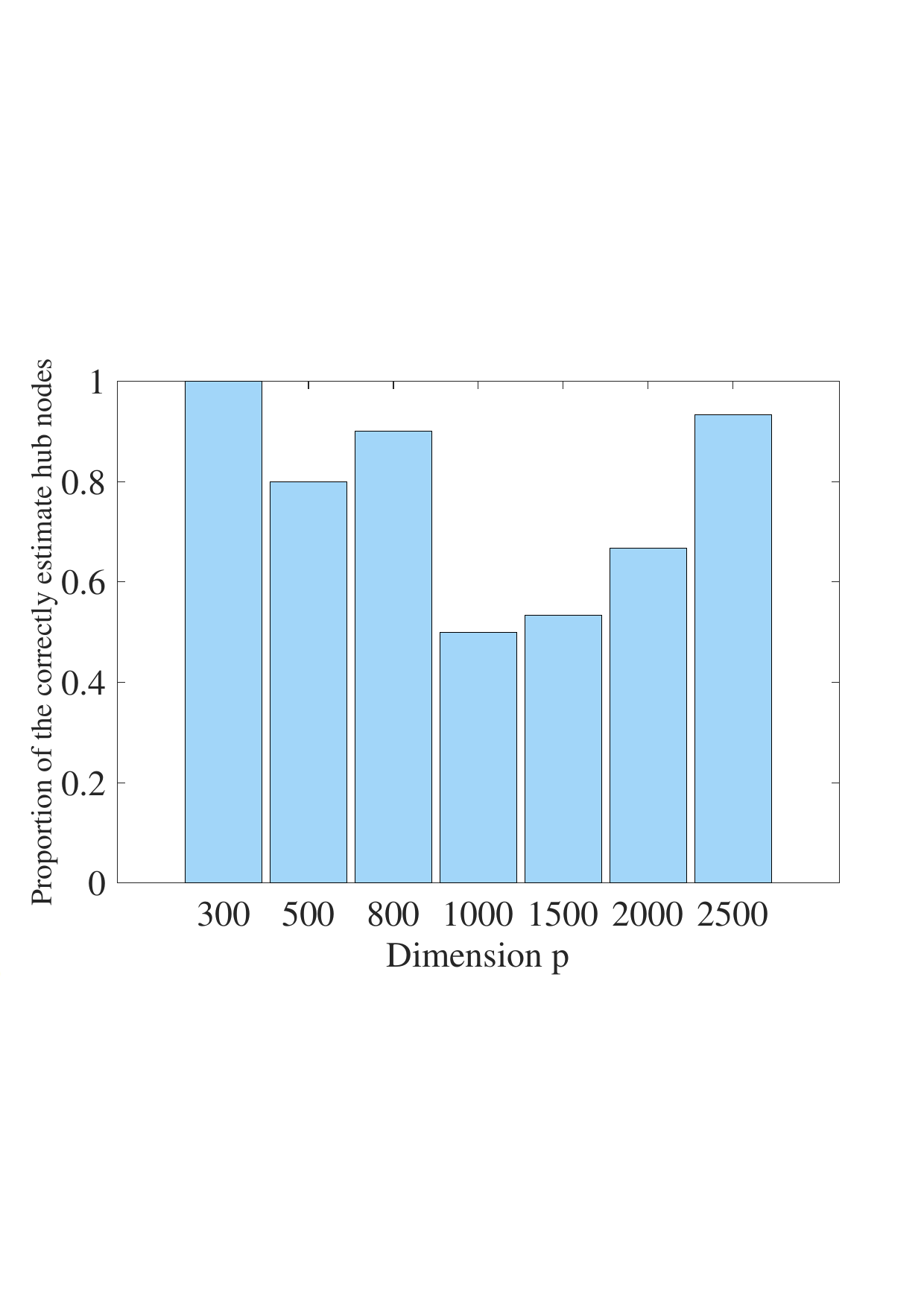}
	\end{minipage}
	\vskip -3.5cm
	\begin{minipage}{1\linewidth}
		\includegraphics[width=0.45\linewidth]{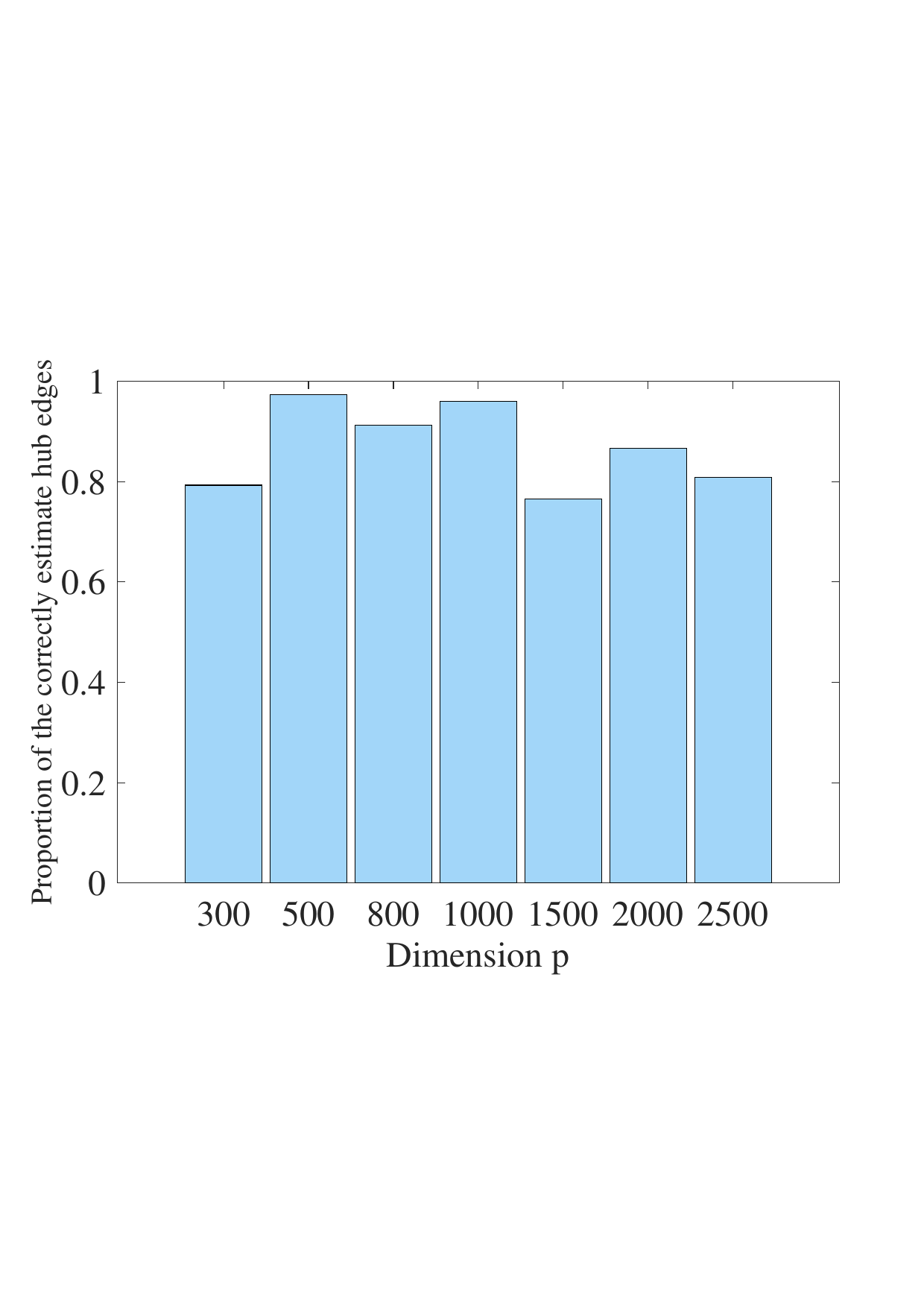}
		\includegraphics[width=0.45\linewidth]{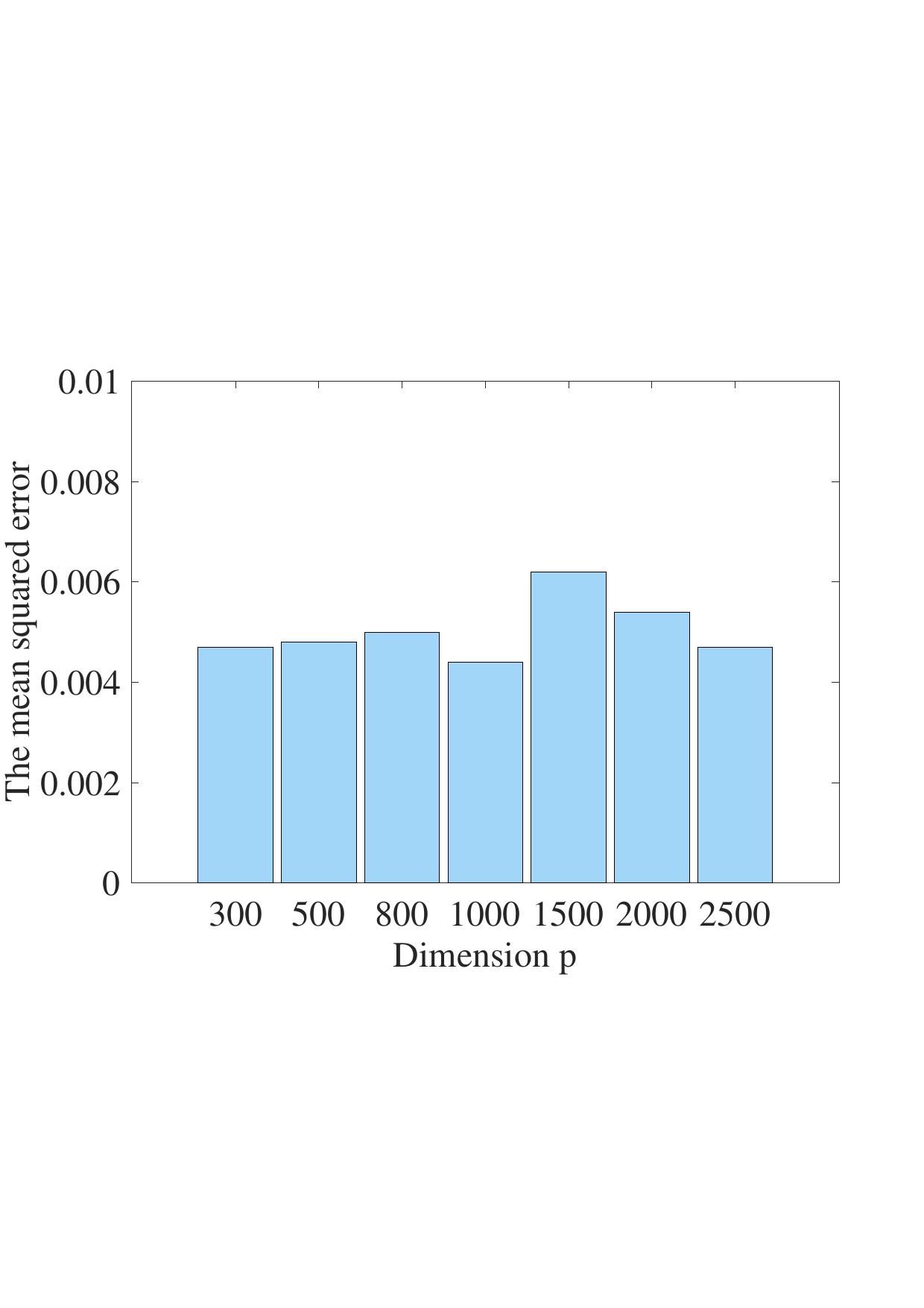}
	\end{minipage}
	\vspace{-0.58in}
	\caption{Results of synthetic data efficacy measures}\label{fig: eff}
		\vspace{-0.2in}
\end{figure}

Meanwhile, in order to illustrate the efficacy of the two-phase algorithm, we draw the relevant data of four criteria (the same as those in \cite{tan2014learning}) to evaluate the efficacy of the algorithm in seven different dimensions for the simulation setting I when the hub nodes are known. As can be seen from Figure \ref{fig: eff}, the two-phase algorithm performs well in the correct estimation of the number of edges, the proportion of correctly estimated hub nodes, the proportion of correctly estimated hub edges, and the mean squared error, which also fully demonstrate the efficacy of the two-phase algorithm.

\subsection{Experiments on real-world data}

In this subsection, we work on two sets of read data to demonstrate the efficiency of the proposed algorithm.

\begin{figure}[H]
	\includegraphics[width=0.95\columnwidth]{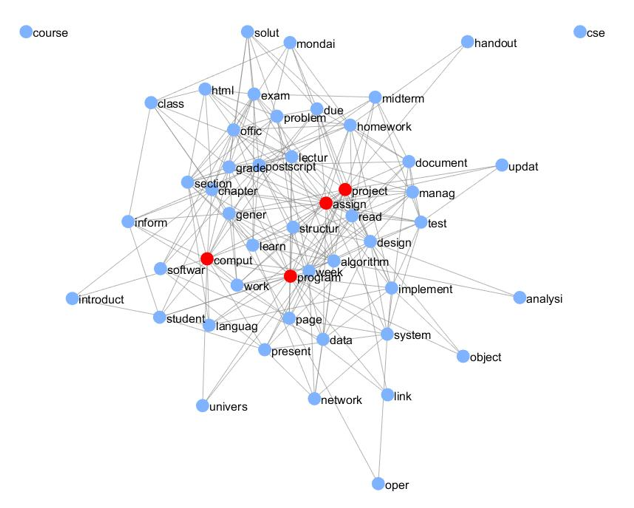}
	
	\caption{The resulting network of student webpages data. The nodes represent 50 words. Detected hub nodes are marked in red.}
	
	\label{fig:student-100}
\end{figure}
\subsubsection{University webpage data}

In this part, we assess the numerical performance of different algorithms using the university webpages dataset, available at http://ana.cachopo.org/datasets-for-single-label-text-categorization. The original dataset consists of webpages collected from the computer science departments of several universities in 1997, including Cornell, Texas, Washington, and Wisconsin. It records the occurrences of various terms (words) found on these webpages. We focus solely on the 544 student webpages and choose 100 terms with the highest entropy for our analysis. Subsequently, we represent these 100 terms as nodes in a Gaussian graphical model.

The aim of the analysis is to explore the relationships between the terms found on the student webpages. Specifically, we seek to identify terms that serve as hubs.
In Table 7, we present the computational results of three algorithms. As observed, the two-phase algorithm continues to outperform both pADMM and dADMM, even though the problem is quite small and all three algorithms are capable of achieving the desired accuracy. In Figure \ref{fig:student-100}, we plot the resulting network. For clarity, we only select 50 nodes to plot the network, including the 40 nodes with the most links and the 10 nodes with the least links. The four most connected nodes, identified as hubs, are highlighted in red.

\begin{table}[H]
	\setlength{\tabcolsep}{.35em}
	\footnotesize
	\parbox{0.98\textwidth}
	\centering
	\caption{For the university webpage data, the test results of the DHGL problem}
	\resizebox{\textwidth}{!}{
		\begin{tabular}{|c|c|c|c|c|c|c|c|c|c|c|}
			\hline
			\multirow{2}*{$(n, p)$} & \multicolumn{3}{c|}{max\{$R_{P},R_{D},R_{C}$\}} & \multicolumn{4}{c|}{iter} & \multicolumn{3}{c|}{time}\\
			\cline{2-11}
			& P & D & T  & P & D & T(I) & T(I\!I) & P & D & T \\
			\hline
			student$(544,100)$ & 9.86e-07 & 5.62e-07 & 3.15e-07  & 498 & 544 & 200 & 11(22)  & 00:00:02 & 00:00:02 & 00:00:01\\
			\hline
		\end{tabular}
		\label{tab:webpage}
	}
\end{table}

\subsubsection{Portfolio data}
\label{subsubsec:portfolios data}

In this part, we compare three algorithms on the portfolio data formed on size and operating profitability, which is downloaded from https://mba.tuck.dartm outh.edu/pages/faculty/ken.french/data\_library.html.
The portfolios, constructed annually at the end of June, are formed by intersecting 10 size-based portfolios (market equity, ME) with 10 operating profitability (OP) portfolios. The size breakpoints for year $t$ correspond to the New York Stock Exchange (NYSE) market equity deciles as of June in year $t$. The OP for June of year $t$ is computed as annual revenues minus the cost of goods sold, interest expense, and selling, general, and administrative expenses, divided by book equity from the last fiscal year ending in $t-1$. The OP breakpoints are also determined by NYSE deciles. This dataset consists of 100 different portfolios, from 6076 sampling between 1963 and 2024. The 100 different portfolios represent 100 nodes in a Gaussian graphical model.

\begin{table}[H]
	\setlength{\tabcolsep}{.35em}
	\footnotesize
	\parbox{0.98\textwidth}
	\centering
	\caption{For the portfolio data, the test results of the DHGL problem}
	\resizebox{\textwidth}{!}{
		\begin{tabular}{|c|c|c|c|c|c|c|c|c|c|c|}
			\hline
			\multirow{2}*{$(n, p)$} & \multicolumn{3}{c|}{max\{$R_{P},R_{D},R_{C}$\}} & \multicolumn{4}{c|}{iter} & \multicolumn{3}{c|}{time}\\
			\cline{2-11}
			& P & D & T  & P & D & T(I) & T(I\!I) & P & D & T \\
			\hline
			portfolios$(6076,100)$ & 3.69e-04 & 1.78e-04 & 9.99e-07  & 10000 & 10000 & 200 & 12(25)  & 00:00:42 & 00:00:38 & 00:00:18\\
			\hline
		\end{tabular}
		\label{tab:portfolios}
	}
\end{table}

\begin{figure}[h]
	\includegraphics[width=0.95\columnwidth]{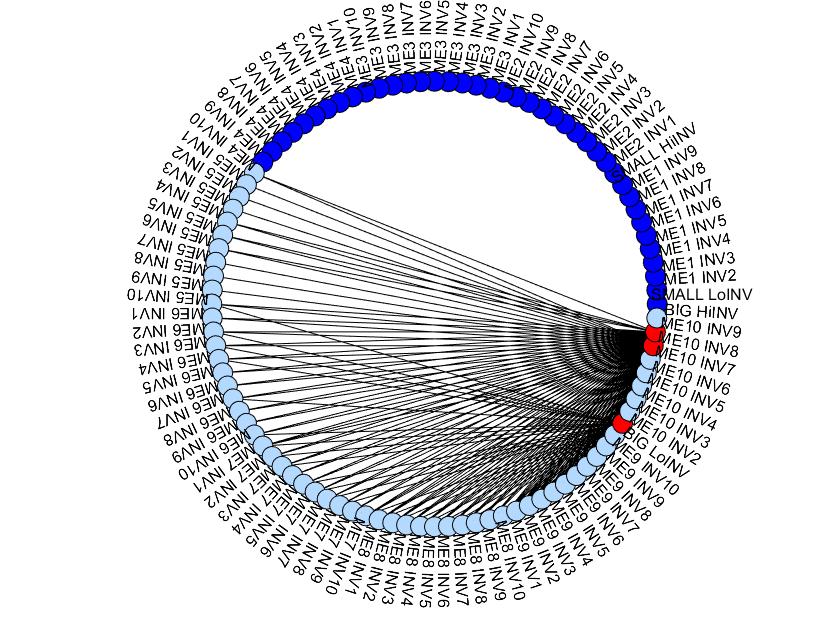}
	
	\caption{The resulting network of the portfolio data. The nodes represent 100 investment stratagies. Detected hub nodes are marked in red.}
	
	\label{fig:runportfolios-100}
\end{figure}

The purpose of this analysis is to study the correlations among the investment portfolios.
In Table 8, we present the computational results of three algorithms. As observed, the two-phase algorithm outperforms both pADMM and dADMM. In Figure \ref{fig:runportfolios-100}, we present the resulting network, where all portfolios are arranged along a large circle. Nodes with 30 or more edges are identified as hubs and highlighted in red, while isolated nodes are shown in blue. The remaining nodes are depicted in light blue.
}

\section{Conclusion}
\label{sec:Conclusion}

In this paper, we have developed a two-phase algorithm to solve the hub graphical lasso model with the structured sparsity. Specifically, we design the dADMM in Phase I to generate a good initial point to warm start Phase I\!I of the ALM. The SSN method is applied to solve the inner subproblems of the ALM. We take full advantage of the sparsity structure of the generalized Jacobian and make the performances of the SSN and ALM efficiently. Numerical experiments on
both synthetic data and real data have demonstrated the efficacy and efficiency of the proposed algorithm.

\section*{Acknowledgments}
We would like to thank the Editor-in-Chief Professor Chi-Wang Shu, the anonymous Associate Editor and referee for their helpful suggestions which greatly improves the quality of the manuscript.

\section*{Funding}
Chengjing Wang's work was supported in part by the National Natural Science Foundation of China
(No. U21A20169), Zhejiang Provincial Natural Science Foundation of China (Grant No. LTGY23H240002).
Meixia Lin's work was supported by the Ministry of Education, Singapore, under its Academic Research Fund Tier 2 grant call (MOE-T2EP20123-0013).

\section*{Data Availability}
The datasets analysed during the current study are available at the following link:\\
https://archive.ics.uci.edu/\\
https://CRAN.R-project.org/package=spectralGraphTopology\\
https://www.genomeweb.com/archive/iconix-links-drugmatrix-database-mdl-information-systems-tools\\
http://ana.cachopo.org/datasets-for-single-label-text-categorization\\
https://mba.tuck.dartmouth.edu/pages/faculty/ken.french/data\_library.html.
\section*{Declarations}
\textbf{Conflict of interest} The authors have not disclosed any competing interests.

\begin{appendices}
\section{Proof of Proposition~\ref{The composite of proximal operator}}
\label{proof of the composite of proximal operator}
\begin{proof}
	Firstly, we calculate the proximal operators of $\varphi$ and $\psi$.
	Since for any $V\in \mathbb{M}^p$,
	\begin{equation*}
		\varphi(V) = \sum_{j=1}^p w_{2,j} \|\mathcal{P}_j V\|,
	\end{equation*}
	by denoting $\widehat{\mathcal{P}}_j: \mathbb{R}^p\rightarrow \mathbb{R}^p$ as $\widehat{\mathcal{P}}_j y=[y_{1};\cdots;y_{j-1};0;y_{j+1};\cdots;y_{p}]$ for any $y\in \mathbb{R}^p$, we have that
	\begin{align*}
		\varphi^{*}(Y) &= \sup_{V\in \mathbb{M}^p}\Big\{\langle Y,V\rangle-\sum_{j=1}^p w_{2,j} \|\mathcal{P}_j V\|\Big\}\\
		&= \sum_{j=1}^{p}\sup_{V_j\in \mathbb{R}^p}\Big\{\langle Y_{j},V_{j}\rangle-w_{2,j} \|\widehat{\mathcal{P}}_j V_j\|\Big\}\\
		&= \sum_{j=1}^{p}\sup_{V_j\in \mathbb{R}^p}\Big\{Y_{jj}V_{jj}+\langle \mathcal{P}_jY,\widehat{\mathcal{P}}_j V_j\rangle-w_{2,j} \|\widehat{\mathcal{P}}_j V_j\|\Big\}.
	\end{align*}
	Note that
	\begin{align*}
		&Y_{jj}V_{jj}+\left\langle \mathcal{P}_jY,\widehat{\mathcal{P}}_j V_j\right\rangle-w_{2,j} \|\widehat{\mathcal{P}}_j V_j\|\leq Y_{jj}V_{jj}+\|\widehat{\mathcal{P}}_j V_j\|(\|\mathcal{P}_j Y\|-w_{2,j}).
	\end{align*}
	Therefore,
	\begin{eqnarray*}
		\varphi^{*}(Y) &=& \delta_{C}(Y),
	\end{eqnarray*}
	where $C:=\Big\{Y\in \mathbb{M}^{p}\,\Big|\,\|\mathcal{P}_{j}Y\| \leq w_{2,j}\textrm{ and }Y_{jj}=0, j=1,2,\cdots,p\Big\}$.
	Then by the Moreau identity, we have
	\begin{eqnarray*}
		\operatorname{prox}_{\varphi}(Y)=Y-\operatorname{prox}_{\varphi^{*}}(Y)=Y-\operatorname{\Pi}_{C}(Y)= Y - \mathcal{P}^* \operatorname{\Pi}_{\mathcal{B}^{w_2}_2}(\mathcal{P}Y),
	\end{eqnarray*}
	where
	\begin{align} \label{eq:projection-B2}
		&[\operatorname{\Pi}_{\mathcal{B}^{w_2}_{2}}(\mathcal{P} Y)]_{(j-1)p+1:jp} = \operatorname{\Pi}_{\mathcal{B}_2^{w_{2,j}}}(\mathcal{P}_j Y)=
		\left\{
		\begin{array}{cl}
			w_{2,j} \frac{\mathcal{P}_j Y}{\|\mathcal{P}_j Y\|}, &\quad \text{if}~ \|\mathcal{P}_j Y\| >  w_{2,j}, \\
			\mathcal{P}_j Y, &\quad \text{otherwise.}
		\end{array}
		\right.
	\end{align}
	
	Similarly, we can calculate that
	\begin{eqnarray*}
		\operatorname{prox}_{\psi}(Y)=Y - \mathcal{P}^*\operatorname{\Pi}_{\mathcal{B}^{w_1}_\infty}(\mathcal{P}Y),
	\end{eqnarray*}
	where
	\begin{align*}
		[\operatorname{\Pi}_{\mathcal{B}^{w_1}_{\infty}}(\mathcal{P} Y)]_{(j-1)p+1:jp} &= \operatorname{\Pi}_{\mathcal{B}_{\infty}^{w_{1,j}}}(\mathcal{P}_j Y)=  \operatorname{sign}(\mathcal{P}_j Y) \circ \min(|\mathcal{P}_j Y|, w_{1,j}).
	\end{align*}
	
	In the following, we prove the result about the proximal mapping of $R(\cdot)$. We can see from \eqref{eq:R-fun} that the function $R(V)$ has a separable structure, thus we only need to prove that
	\begin{align*}
		&\operatorname{prox}_{w_{1,j} \|\mathcal{P}_j \cdot\|_1 + w_{2,j} \|\mathcal{P}_j \cdot\|}(Y)= \operatorname{prox}_{w_{2,j} \|\mathcal{P}_j \cdot\|}\circ\operatorname{prox}_{w_{1,j} \|\mathcal{P}_j \cdot\|_{1}}(Y), \ j=1,2,\cdots,p.
	\end{align*}
	
	By \cite[Theorem 1]{yu2013decomposing}, it suffices to show that for each $j=1,\cdots,p$,
	\begin{eqnarray*}
		\partial(w_{1,j}\|\mathcal{P}_{j}Y\|_{1})\subseteq\partial(w_{1,j}\|\mathcal{P}_{j}Z\|_{1}),
	\end{eqnarray*}
	where $Z={\rm prox}_{\varphi}(Y)=Y-\mathcal{P}^{*}\operatorname{\Pi}_{\mathcal{B}_{2}^{w_2}}(\mathcal{P}Y)$. We prove this relation in two cases:
	
	(1) If $\|\mathcal{P}_j Y\| \leq w_{2,j}$, from \eqref{eq:projection-B2}, we have $\mathcal{P}_{j}Z=0$. Then according to \cite[Theorem 23.9]{Rockafellar1970Convex}, we obtain
	\begin{eqnarray*}
		\partial(w_{1,j}\|\mathcal{P}_{j}Z\|_{1})=\Big\{\mathcal{P}_{j}^{*}v\,\Big|\,v\in[-w_{1,j},w_{1,j}]^{p} \Big\},
	\end{eqnarray*}
	which contains $\partial(w_{1,j}\|\mathcal{P}_{j}Y\|_{1})$.
	
	(2) If $\|\mathcal{P}_j Y\| >  w_{2,j}$, from \eqref{eq:projection-B2}, we have $\mathcal{P}_{j}Z=\left(1-\frac{w_{2,j}}{\|\mathcal{P}_{j}Y\|}\right)(\mathcal{P}_{j}Y)$, which implies $\textrm{sign}(\mathcal{P}_j Y)=\textrm{sign}(\mathcal{P}_{j}Z)$. Therefore, $\partial(w_{1,j}\|\mathcal{P}_{j}Y\|_{1})\subseteq\partial(w_{1,j}\|\mathcal{P}_{j}Z\|_{1})$.
	
	Now we have finished all the proof.
\end{proof}
\end{appendices}

\bibliographystyle{spmpsci}
\bibliography{bib}


%
%



\end{document}